\definecolor{darkred}{rgb}{0.7,0,0}
\definecolor{lightred}{rgb}{0.5,0,0}
\definecolor{lightgreen}{rgb}{0,0.8,0}
\definecolor{mathset}{HTML}{CFD9E8}
\definecolor{mathboundary}{HTML}{5E81B5}
\definecolor{remove}{HTML}{E34A33}
\definecolor{edit}{HTML}{0000AA}
\tikzstyle{frac}=[rectangle,draw=black,fill=lightgreen,
\tikzstyle{active}=[draw=darkred,very thick]
\tikzstyle{inactive}=[draw=lightred, dotted,very thick]
\tikzstyle{notactive}=[draw=mathboundary,thick, fill=mathset]
\tikzstyle{notactiveinner}=[draw=mathboundary,thick,fill=white]
\begin{document}

\title{Polyhedral approximation in mixed-integer convex optimization%\thanks{Grants or other notes
%about the article that should go on the front page should be
%placed here. General acknowledgments should be placed at the end of the article.}
}

\author{Miles Lubin \and Emre Yamangil \and \\ Russell Bent \and Juan Pablo Vielma
}

%\authorrunning{Short form of author list} % if too long for running head

\institute{M. Lubin \and J.\ P. Vielma \at
              Massachusetts Institute of Technology, Cambridge, MA, USA \\
              \email{mlubin@mit.edu}           %  \\
%             \emph{Present address:} of F. Author  %  if needed
           \and
           E. Yamangil \and R. Bent \at
              Los Alamos National Laboratory, Los Alamos, NM, USA
}

\date{Received: date / Accepted: date}
% The correct dates will be entered by the editor

\maketitle

\begin{abstract}
    Generalizing both mixed-integer linear optimization and convex optimization, \textit{mixed-integer convex optimization} possesses broad modeling power but has seen relatively few advances in general-purpose solvers in recent years. In this paper, we intend to provide a broadly accessible introduction to our recent work in developing algorithms and software for this problem class. Our approach is based on constructing polyhedral outer approximations of the convex constraints, resulting in a global solution by solving a finite number of mixed-integer linear and continuous convex subproblems. The key advance we present is to strengthen the polyhedral approximations by constructing them in a higher-dimensional space. In order to automate this \textit{extended formulation} we rely on the algebraic modeling technique of disciplined convex programming (DCP), and for generality and ease of implementation we use conic representations of the convex constraints. Although our framework requires a manual translation of existing models into DCP form, after performing this transformation on the MINLPLIB2 benchmark library we were able to solve a number of unsolved instances and on many other instances achieve superior performance compared with state-of-the-art solvers like Bonmin, SCIP, and Artelys Knitro.
\keywords{Convex MINLP \and Outer approximation \and Disciplined convex programming}
% \PACS{PACS code1 \and PACS code2 \and more}
% \subclass{MSC code1 \and MSC code2 \and more}
\end{abstract}

\section{Introduction}

Mixed-integer linear programming (MILP) has established itself as a practical framework for optimization problems in scheduling, logistics, planning, and many other areas. Although these problems are in general NP-Hard, more than 50 years of investment in MILP techniques has resulted in powerful commercial and open-source solvers that can solve MILP problems of practical interest within reasonable time limits~\cite{50years}. The aim of this paper is to develop methodologies for solving the more general class of \textit{mixed-integer convex optimization}---or mixed-integer convex programming (MICP)---problems by reducing them to a sequence of MILP problems.

In order to employ MILP, we relax the convex constraints by representing them as an intersection of a finite number of half-spaces, that is, polyhedral constraints.
Based on this idea, Duran and Grossman~\cite{DuranGrossmann} and Leyffer~\cite{SvenThesis} developed the \textit{outer approximation} (OA) algorithm which solves a sequence of MILP and continuous, convex subproblems to deliver a globally optimal solution for MICP problems in a finite number of iterations; we present a generalized version of this algorithm in Section~\ref{sec:OA}.

Despite the fact that many MICP approaches, including the OA algorithm, build on MILP approaches, there remains a significant performance gap between the two problem classes. Bonami, Kilin\c{c}, and Linderoth~\cite{BonamiReview} note in a recent review that continued advances in MILP have translated into ``far more modest'' growth in the scale of problems which MICP solvers can solve within reasonable time limits. Hence, despite numerous potential applications (see the reviews~\cite{BonamiReview,MahajanReview}), our perception is that MICP has not entered the mainstream of optimization techniques, perhaps with the exception of the special case of \textit{mixed-integer second-order cone programming} (MISOCP) which we will discuss at length.

The cases in which the OA algorithm and others based on polyhedral approximation perform poorly are those in which the convex set of constraints is poorly approximated by a small number of half-spaces. In Section~\ref{sec:hijazi}, we review a simple example identified by Hijazi et al.~\cite{Hijazi} where the OA algorithm requires $2^n$ iterations to solve an MICP instance with $n$ decision variables. Fortunately, \cite{Hijazi} also propose a solution based on ideas from \cite{Baron} that can significantly improve the quality of a polyhedral approximation by constructing the approximation in a higher dimensional space. These constructions are known as \textit{extended formulations}, which have also been considered by~\cite{VielmaExtendedFormulations,MustafaThesis}. Although Hijazi et al. demonstrate impressive computational gains by using extended formulations, implementing these techniques within traditional MICP solvers requires more structural information than provided by ``black-box'' oracles through which these solvers typically interact with nonlinear functions. To our knowledge, MINOTAUR~\cite{minotaur} is the only such solver to automate extended formulations. In Section~\ref{sec:dcp} we identify the modeling concept of disciplined convex programming (DCP)~\cite{DCP}, popularized in the CVX software package~\cite{cvx}, as a practical solution to the problem of automatically generating extended formulations based on a user's algebraic representation of an MICP problem.

Our investigation of DCP leads us in Section~\ref{sec:micone} to consider \textit{conic} optimization as a representation of convex constraints that compactly encodes all of the information needed to construct extended formulations. This key observation links together a number of streams in convex optimization and MICP research, and in particular explains the increasingly popular role of MISOCP and how it can be extended to cover ``general'' MICP.  Pulling these pieces together, in Section~\ref{sec:conicoa} we develop the first OA algorithm for \textit{mixed-integer conic} optimization problems based on conic duality. In Section 7, we present \textit{Pajarito}, a new solver for MICP based on the conic OA algorithm and compare its efficiency with state-of-the-art MICP solvers. We report the solution of a number of previously unsolved benchmark instances.

This paper is meant to be a self-contained introduction to all of the concepts beyond convex optimization and mixed-integer linear optimization needed to understand the algorithm implemented in Pajarito. Following broad interest in our initial work~\cite{IPCO}, we believe that a primary contribution of this paper is to compile the state of the art for readers and to tell a more detailed story of why DCP and conic representations are a natural fit for MICP. For example, in Section~\ref{sec:OA} we present the OA algorithm in a straightforward yet generic fashion not considered by previous authors that encompasses both the traditional smooth setting and the conic setting. A notable theoretical contribution beyond~\cite{IPCO} is an example in Section~\ref{sec:conicoa} of what may happen when the assumptions of the OA algorithm fail: an MICP instance for which \textit{no} polyhedral outer approximation is sufficient. Our computational results in Section~\ref{sec:computation} have been revised with more comparisons to existing state-of-the-art solvers, and as a final contribution above~\cite{IPCO}, our solver Pajarito has now been publicly released along with the data and scripts required to reproduce our experiments.

\section{State of the art: polyhedral outer approximation}\label{sec:OA}

We state a generic mixed-integer convex optimization problem as
\begin{align}\label{eq:miconv}
\min_{x} \quad& c^Tx  \notag \\
\text{s.t.} \quad& x \in X,\tag{MICONV}\\
&x_i \in \mathbb{Z}, l_i \le x_i \le u_i\quad \forall i \in I, \notag
\end{align}
\noindent
where $X$ is a closed, convex set, and the set $I \subseteq \{ 1, 2, \ldots, n \}$ indexes the integer-constrained variables, over which we have explicit finite bounds $l_i$ and $u_i$ for $i \in I$. We assume that the objective function is linear. This assumption is without loss of generality because, given a convex, nonlinear objective function $f(x)$, we may introduce an additional variable $t$, constrain $(t,x)$ to fall in the set $\{ (t,x) : f(x) \le t \}$, known as the \textit{epigraph} of $f$, and then take $t$ as the linear objective to minimize~\cite{BonamiReview}. For concreteness, the convex set of constraints $X$ could be specified as
\begin{equation}\label{eq:Xfunc}
X = \{ x \in \mathbb{R}^n : g_j(x) \le 0, j \in J \},
\end{equation}
for some set $J$ where each $g_j$ is a smooth, convex function, although we do not make this assumption. We refer to the constraints $x_i \in \mathbb{Z}\,\, \forall \, i \in I$ as \textit{integrality constraints}. Note that when these integrality constraints are relaxed (i.e., removed),~\ref{eq:miconv} becomes a convex optimization problem.

A straightforward approach for finding the global solution of~\eqref{eq:miconv} is branch and bound. Branch and bound is an enumerative algorithm where lower bounds derived from relaxing the integrality constraints in~\eqref{eq:miconv} are combined with recursively partitioning the space of possible integer solutions. The recursive partition is based on ``branches'' such as $x_i \le k$ and $x_i \ge k + 1$ for some integer-constrained index $i \in I$ and some value $k$ chosen between the lower bound $l_i$ and the upper bound $u_i$ of $x_i$. In the worst case, branch and bound requires enumerating all possible assignments of the integer variables, but in practice it can perform much better by effectively pruning the search tree. Gupta and Ravindran~\cite{Gupta85} describe an early implementation of branch-and-bound for MICP, and Bonami et al.~\cite{BonamiNLPBNB} more recently revisit this approach.

On many problems, however, the branch-and-bound algorithm is not competitive with an alternative family of approaches based on \textit{polyhedral outer approximation}. Driven by the availability of effective solvers for linear programming (LP) and MILP, it was observed in the early 1990s by Leyffer and others~\cite{SvenThesis} that it is often more effective to avoid solving convex, nonlinear relaxations, when possible, in favor of solving polyhedral relaxations using MILP. This idea has influenced a majority of the solvers recently reviewed and benchmarked by Bonami et al.~\cite{BonamiReview}.

In this section, we will provide a sketch of an \textit{outer approximation} (OA) algorithm. We derive the algorithm in a more general way than most authors that will later be useful in the discussion of mixed-integer conic problems in Section~\ref{sec:conicoa}, although for intuition and concreteness of the discussion we illustrate the key points of the algorithm for the case of the smooth, convex representation~\eqref{eq:Xfunc}, which is the traditional setting. We refer readers to~\cite{Bonmin,DuranGrossmann,Filmint} for a more rigorous treatment of the traditional setting and Section~\ref{sec:conicoa} for more on the conic setting (i.e., when $X$ is an intersection of convex cone and an affine subspace). We begin by defining polyhedral outer approximations.

\begin{definition}
A set $P$ is a polyhedral outer approximation of a convex set $X$ if $P$ is a polyhedron (an intersection of a finite number of closed half-spaces, i.e., linear inequalities of the form $a_i^Tx \le b_i$) and $P$ contains $X$, i.e., $X \subseteq P$.
\end{definition}

Note that we have not specified the explicit form of the polyhedron. While the traditional OA algorithm imagines $P$ to be of the form $\{ x \in \mathbb{R} : Ax \le b\}$ for some $A$ and $b$, it is useful to not tie ourselves, for now, to a particular representation of the polyhedra.

Polyhedral outer approximations of convex sets are quite natural in the sense that every closed convex set can be represented as an intersection of an \textit{infinite} number of closed half-spaces~\cite{hiriart-lemarechal-1996}. For instance, when $X$ is given in the functional form~\eqref{eq:Xfunc} and each $g_j : \mathbb{R}^n \to \mathbb{R}$ is smooth and finite-valued over $\mathbb{R}^n$ then the following equivalence holds:
\begin{equation}\label{eq:finiteoa}
X = \{ x \in \mathbb{R}^n : g_j(x') + \nabla g_j(x')^T(x-x') \le 0\,\, \forall \, x' \in \mathbb{R}^n, j \in J \},
\end{equation}
where $\nabla g_j(x')$ is the gradient of $g_j$. When some $g_j$ functions are not defined (or do not take finite values) over all of $\mathbb{R}^n$ then these ``gradient inequalities'' plus additional linear constraints enforcing the domain of each $g_j$ provide a representation of $X$ as an intersection of halfspaces; see~\cite{hiriart-lemarechal-1996} for further discussion.

Hence, in the most basic case, a polyhedral approximation of $X$ can be derived by picking a finite number of points $S \subset \mathbb{R}^n$ and collecting  the half-spaces in~\eqref{eq:finiteoa} for $x' \in S$ instead of for all $x' \in \mathbb{R}^n$. What is perhaps surprising is that a finite number of half-spaces provides a sufficient representation of $X$ in order to solve~\eqref{eq:miconv} to global optimality, under some assumptions\footnote{In Lemma~\ref{claim:counterexample} we provide an explicit counterexample.}. This idea is encompassed by the OA algorithm which we now describe. 

Given a polyhedral outer approximation $P$ of the constraint set $X$, we define the following mixed-integer linear \textit{relaxation} of~\eqref{eq:miconv}

\begin{align}\label{eq:mioa1}
r_P = \min_{x}\quad & c^Tx  \notag \\
\text{s.t.} \quad& x \in P,\tag{MIOA(P)}\\
&x_i \in \mathbb{Z},\, l_i \le x_i \le u_i\quad \forall i \in I. \notag
\end{align}

Note that~\ref{eq:mioa1} is a relaxation because any $x$ feasible to~\eqref{eq:miconv} must be feasible to~\ref{eq:mioa1}. Therefore the optimal value of~\ref{eq:mioa1} provides a lower bound on the optimal value of~\eqref{eq:miconv}. This bound \textit{may} be NP-Hard to compute, since it requires solving a mixed-integer linear optimization problem; nevertheless we may use existing, powerful MILP solvers for these relaxations.

For notational convenience, we sometimes split the integer-constrained components and the continuous components of $x$, respectively, writing $x = (x_I,x_{\bar I})$ where $\bar I = \{1, \ldots, n\} \setminus I$.
Given a solution $x^* = (x_I^*,x_{\bar I}^*)$ to~\ref{eq:mioa1}, the OA algorithm proceeds to solve the continuous, convex problem~\ref{eq:convsub} that results from fixing the integer-constrained variables $x_I$ to their values in $x^*_I$:
\begin{align}\label{eq:convsub}
v_{x_I^*} = \min\quad & c^Tx  \notag \\
\text{s.t.} \quad& x \in X,\tag{CONV($x^*_I$)}\\
&x_I = x_I^*. \notag
\end{align}

If~\ref{eq:convsub} is feasible, let $x'$ be the optimal solution. Then $x'$ is a feasible solution to~\eqref{eq:miconv} and provides a corresponding upper bound on the best possible objective value. If the objective value of this convex subproblem equals the objective value of~\ref{eq:mioa1} (i.e., $c^Tx' = c^Tx^*$), then $x'$ is a globally optimal solution of~\eqref{eq:miconv}. If there is a gap, then the OA algorithm must update the polyhedral outer approximation $P$ and re-solve~\ref{eq:mioa1} with a tighter approximation, yielding a nondecreasing sequence of lower bounds.

To ensure finite termination of OA it is sufficient to prevent
repetition of unique assignments of the integer-valued components $x_I^*$, because there is only a finite number of possible values. The following lemma states a condition on the polyhedral outer approximation $P$ that helps prove finite convergence.

\begin{lemma}\label{lem:oafinite}
Fixing $x_I \in \mathbb{Z}^{|I|}$, if $x = (x_I,x_{\bar I}) \in P$ implies $c^Tx \ge v_{x_I}$ for all $x_{\bar I} \in \mathbb{R}^{n-|I|}$ where $v_{x_I}$ is the optimal value of (CONV($x_I$)) then the OA algorithm must terminate if~\ref{eq:mioa1} returns an optimal solution $x^*$ with integer components matching $x^*_I = x_I$.
\end{lemma}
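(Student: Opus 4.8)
The plan is to use a sandwich argument: the mixed-integer linear relaxation~\ref{eq:mioa1} supplies a lower bound, the continuous convex restriction CONV($x_I$) supplies an upper bound, and the hypothesis on $P$ is precisely what forces these two bounds to coincide, which triggers the stopping rule of the OA algorithm described above.

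First I would record the inequalities available at the iteration in which~\ref{eq:mioa1} returns the optimal solution $x^* = (x_I, x^*_{\bar I})$. Since~\ref{eq:mioa1} is a relaxation of~\eqref{eq:miconv} and $x^*$ is optimal for it, $r_P = c^Tx^*$; and since $x^* \in P$ with integer part $x_I$, the hypothesis gives $c^Tx^* \ge v_{x_I}$, hence $r_P \ge v_{x_I}$. Because $x^*$ is a genuine finite optimal solution, $r_P$ is finite, so this already rules out $v_{x_I} = +\infty$; that is, CONV($x_I$) is feasible. Moreover, any point feasible to CONV($x_I$) is feasible to~\eqref{eq:miconv} (its integer components equal $x_I$, which is integral and lies in $[l_i,u_i]$ because $x^*$ did), so CONV($x_I$) is bounded below and attains a finite optimal value at some $x'$ with $c^Tx' = v_{x_I}$.

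Next I would close the loop in the other direction. Since $X \subseteq P$ and $x'_I = x_I$ is integral and within the bounds, $x'$ is feasible to~\ref{eq:mioa1}, so optimality of $x^*$ gives $c^Tx' \ge r_P$. Combining this with $r_P \ge v_{x_I} = c^Tx'$ forces $c^Tx^* = r_P = v_{x_I} = c^Tx'$. Thus the objective value of the convex subproblem equals that of~\ref{eq:mioa1}, which is exactly the termination criterion stated above (and $x'$ is then globally optimal for~\eqref{eq:miconv}, being feasible with objective value equal to the lower bound $r_P$); hence the algorithm halts.

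I expect the only delicate points to be bookkeeping ones: handling the ``$v_{x_I} = +\infty$'' convention for an infeasible subproblem by invoking finiteness of $r_P$, and checking that the convex optimizer $x'$ is admissible back in~\ref{eq:mioa1}, which rests on $X \subseteq P$ together with the inherited integrality and variable bounds. Everything else follows directly from the relaxation/restriction relationships, so I do not anticipate a substantive obstacle.
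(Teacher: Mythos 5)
Your proof is correct and follows essentially the same sandwich argument as the paper: the hypothesis on $P$ gives $r_P = c^Tx^* \ge v_{x_I}$, while the relaxation property gives $r_P \le v_{x_I}$, forcing equality and hence termination. You additionally spell out two details the paper leaves implicit (that the finiteness of $r_P$ rules out an infeasible subproblem, and that the convex optimizer is feasible for~\ref{eq:mioa1} via $X \subseteq P$), which is harmless elaboration rather than a different route.
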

\begin{proof}
Assume we solve~\ref{eq:mioa1} and obtain a solution $x^*$. If the integer part of $x^*$ matches $x_I$, by our assumptions we have $r_P = c^Tx^* \ge v_{x_I}$, where $r_P$ is the optimal value of~\ref{eq:mioa1}. Since~\ref{eq:mioa1} is a relaxation and $v_{x_I}$ is the objective value of a feasible solution, then we must have $r_P = v_{x_I}$. Thus, we have proven global optimality of this feasible solution and terminate.
\end{proof}

Note that Lemma~\ref{lem:oafinite} provides a general condition that does not assume any particular representation of the convex constraints $X$. In the traditional setting of the smooth, convex representation~\eqref{eq:Xfunc}, if $x'$ is an optimal solution to~\ref{eq:convsub} \textit{and strong duality holds}, e.g., as in Prop 5.1.5 of Bertsekas~\cite{bertsekas}, then the set of constraints
\begin{equation}\label{eq:kkt}
g_j(x') + \nabla g_j(x')^T(x-x') \le 0\,\, \forall \, j \in J
\end{equation}
are sufficient to enforce the condition in Lemma~\ref{lem:oafinite} for finite convergence. In other words, within the OA loop after solving~\ref{eq:convsub}, updating $P$ by adding the constraints~\eqref{eq:kkt} is sufficient to ensure that the integer solution $x_I^*$ does not repeat, except possibly at termination. 
Intuitively, strong duality in~\ref{eq:convsub} implies that there are no descent directions (over the continuous variables) from $x'$ which are feasible to a first-order approximation of the constraints $g_j(x) \le 0$ for $j \in J$~\cite{bertsekas}. Hence a point $x = (x_I,x_{\bar I})$ sharing the integer components $x_I = x_I^*$ must satisfy $c^T(x-x') \ge 0$ or precisely $c^Tx \ge v_{x_I^*}$. See~\cite{SvenThesis,DuranGrossmann,Filmint} for further discussion.

If~\ref{eq:convsub} is infeasible, then to ensure finite convergence it is important to refine the polyhedral approximation $P$ to exclude the corresponding integer point. That is, we update $P$ so that
\begin{equation}\label{eq:infeascase}
\{ x \in \mathbb{R}^n : x_I = x_I^* \} \cap P = \emptyset.
\end{equation}
In the traditional smooth setting, it is possible in the infeasible case to derive a set of constraints analogous to~\eqref{eq:kkt}, e.g., by solving an auxiliary feasibility problem where we also assume strong duality holds~\cite{Bonmin,Filmint}.  

To review,
the OA algorithm proceeds in a loop between the MILP relaxation~\ref{eq:mioa1} and the continuous subproblem with integer values fixed\\~\ref{eq:convsub}. The MILP relaxation provides lower bounds and feeds integer assignments to the continuous subproblem. The continuous subproblem yields feasible solutions \textit{and} sufficient information to update the polyhedral approximation in order to avoid repeating the same assignment of integer values. The algorithm is stated more formally in Algorithm~\ref{alg:oa} and illustrated in Figure~\ref{fig:oa}.

\begin{algorithm}[ht]\small
\caption{The polyhedral outer approximation (OA) algorithm}\label{alg:oa}
\begin{algorithmic}
\State \textbf{Initialize:} $z_U \leftarrow \infty, z_L \leftarrow -\infty$, polyhedron $P \supset X$ such that~\ref{eq:mioa1} is bounded. Fix convergence tolerance $\epsilon$.
\While{$z_U - z_L \ge \epsilon$}
\State Solve \ref{eq:mioa1}.
\If{\ref{eq:mioa1} is infeasible}
\State \eqref{eq:miconv} is infeasible, so terminate.
\EndIf
\State Let $x^*$ be the optimal solution of \ref{eq:mioa1} with objective value $w_T$.
\State Update lower bound $z_L \leftarrow w_T$.
\State Solve~\ref{eq:convsub}.
\If{\ref{eq:convsub} is feasible}
\State Let $x'$ be an optimal solution of~\ref{eq:convsub} with objective value $v_{x_I^*}$.
\State Derive polyhedron $Q$ satisfying  $x = (x_I^*,x_{\bar I}) \in Q$ implies $c^Tx \ge v_{x_I^*}$ for\State all $x_{\bar I} \in \mathbb{R}^{n-|I|}$ by using strong duality (e.g.,~\eqref{eq:kkt}).
\If{$v_{x_I^*} < z_U$}
\State Update upper bound $z_U \leftarrow v_{x_I^*}$.
\State Record $x'$ as the best known solution.
\EndIf
\ElsIf{\ref{eq:convsub} is infeasible}
\State Derive polyhedron $Q$ satisfying $\{ x \in \mathbb{R}^n : x_I = x_I^* \} \cap Q = \emptyset$.
\EndIf
\State Update $P \leftarrow P \cap Q$.
\EndWhile
\end{algorithmic}
\end{algorithm}

The efficiency of the OA algorithm is derived from the speed of solving the~\ref{eq:mioa1} problem by using state-of-the-art MILP solvers. Indeed, in 2014 benchmarks by Hans Mittelman, the OA algorithm implemented within Bonmin using CPLEX as the MILP solver was found to be the fastest among MICP solvers~\cite{hans}. In spite of taking advantage of MILP solvers, the \textit{traditional} OA algorithm suffers from the fact that the gradient inequalities~\eqref{eq:kkt} may not be sufficiently strong to ensure fast convergence. In the following section, we identify when these conditions may occur and how to work around them within the framework of OA.

\begin{figure}[t]

\begin{tikzpicture}
\path[use as bounding box] (-3,-3) rectangle (3,3);

%		\draw[step=1cm,gray,very thin] (-2.9,-2.9) grid (2.9,2.9);
%	\draw[->] (-3,0) -- node[pos=1,below,very thin]{} (3,0);
%\draw[->] (0,-3) -- node[pos=1,left,very thin]{}(0,3);
\draw[->,very thick] (2.2,1.5)--++(15:0.5) node[midway,above,sloped]{$c$};
%\draw (intersection of (0,0) circle (2.5cm) and (1,3)--(1,-3) )  -- (90:1cm) arc (90:360:1cm) arc (0:30:1cm) -- cycle;

%\foreach \y in {-2,...,2} 
%	\foreach \x in {-1,0,1}
%	 \filldraw (\x,\y) circle (0.05 cm);
%\foreach \x in {-2,2} 
%	\foreach \y in {-1,0,1}
%	 \filldraw (\x,\y) circle (0.05 cm);

\draw[notactive] (0,0) circle (2.5cm);

\draw[ultra thick] (-2,-1.5) -- (-2, 1.5);
\draw[ultra thick] (-1,-2.2912) -- (-1, 2.2912);
\draw[ultra thick] (0,-2.5) -- (0, 2.5);
\draw[ultra thick] (1,-2.2912) -- (1, 2.2912);
\draw[ultra thick] (2,-1.5) -- (2, 1.5);

\draw[ultra thick,<->,draw=mathboundary] (1.489161,2.813727) -- (1.811137,2.9) -- (2.8,-0.79048) -- (2.4780246,-0.876753);

%\draw[fill=lightgreen,draw=black] (2,1.5) circle (0.05 cm);

%\draw[active] (0,0) circle (2.5cm);
%\draw[notactive] (0,0) circle (2.5cm);
%\draw[active] (-2.5,-2.5) rectangle (2.5,2.5);

\node at (2.41481,0.647048) [frac,label=right:$x'$] {};

\node at (2,2.195) [frac,label=right:$x^*$] {}; 
\end{tikzpicture}
\begin{tikzpicture}
\path[use as bounding box] (-3,-3) rectangle (3,3);

%		\draw[step=1cm,gray,very thin] (-2.9,-2.9) grid (2.9,2.9);
%	\draw[->] (-3,0) -- node[pos=1,below,very thin]{} (3,0);
%\draw[->] (0,-3) -- node[pos=1,left,very thin]{}(0,3);
%\draw[->,very thick] (-2.2,2.0)--++(15:0.5) node[midway,above,sloped]{$c$};
%\draw (intersection of (0,0) circle (2.5cm) and (1,3)--(1,-3) )  -- (90:1cm) arc (90:360:1cm) arc (0:30:1cm) -- cycle;

%\foreach \y in {-2,...,2} 
%	\foreach \x in {-1,0,1}
%	 \filldraw (\x,\y) circle (0.05 cm);
%\foreach \x in {-2,2} 
%	\foreach \y in {-1,0,1}
%	 \filldraw (\x,\y) circle (0.05 cm);

\draw[notactive] (0,0) circle (2.5cm);

\draw[ultra thick] (-2,-1.5) -- (-2, 1.5);
\draw[ultra thick] (-1,-2.2912) -- (-1, 2.2912);
\draw[ultra thick] (0,-2.5) -- (0, 2.5);
\draw[ultra thick] (1,-2.2912) -- (1, 2.2912);
\draw[ultra thick] (2,-1.5) -- (2, 1.5);

\draw[ultra thick,<->,draw=mathboundary] (1.489161,2.813727) -- (1.811137,2.9) -- (2.8,-0.79048) -- (2.4780246,-0.876753);

\draw[ultra thick,<->,draw=mathboundary] (0.90833, 2.4) -- (1.175,2.6) -- (2.9,0.3) -- (2.63333,0.1);

%\draw[fill=lightgreen,draw=black] (2,1.5) circle (0.05 cm);

%\draw[active] (0,0) circle (2.5cm);

%\draw[active] (2.5,-2.5) -- (-2.5,-2.5) -- (-2.5,2.5) -- (1.25,2.5) -- (2.5,0.8333) -- (2.5,-2.5);
%\begin{scope}
%\clip[rotate=-45] (2,1.5)+(0.6pt, 0.6pt) rectangle (-2.5,-2.5);
%\draw[active] (-2.5,-2.5) rectangle (2.5,2.5);
%\end{scope}

\node at (2,1.5) [frac,label=left:$x'$] {}; 

\end{tikzpicture}
\caption{An illustration of the outer approximation algorithm. Here, we minimize a linear objective $c$ over the ball $x_1^2 + x_2^2 \le 2.5$ with $x_1$ integer constrained. On the left, the point $x'$ is the solution of the continuous relaxation, and we initialize the polyhedral outer approximation with the tangent at $x'$. We then solve the~\ref{eq:mioa1} subproblem, which yields $x^*$. Fixing $x_1=2$, we optimize over the circle and update the polyhedral approximation with the tangent at $x'$ (on the right). In the next iteration of the OA algorithm (not shown), we will prove global optimality of $x'$. }\label{fig:oa}
\end{figure}
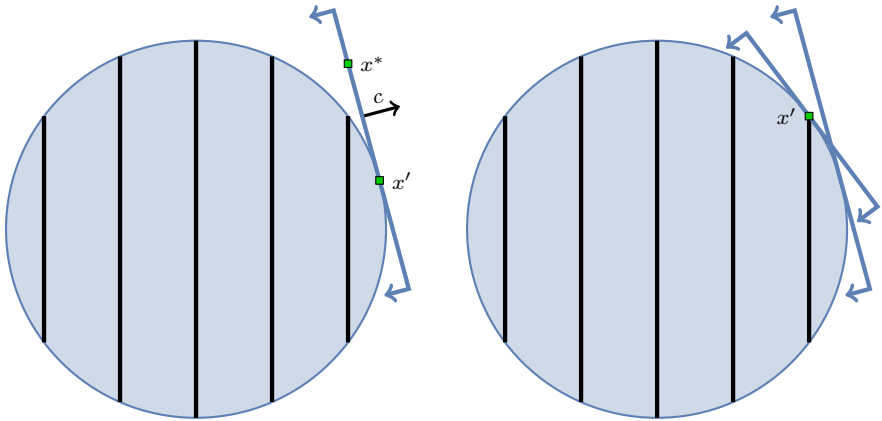

\section{State of the art: outer approximation enhancements}\label{sec:hijazi}

The outer approximation algorithm is powerful but relies on polyhedral outer approximations serving as good approximations of convex sets. The assumptions of the OA algorithm guarantee that there exists a polyhedron $P$ such that the optimal objective value of~\ref{eq:mioa1} matches the optimal objective value of~\eqref{eq:miconv}, precisely at convergence. In the case that~\eqref{eq:miconv} has no feasible solution, these assumptions furthermore guarantee that there exists an outer approximating polyhedron $P$ such that~\ref{eq:mioa1} has no feasible solution. In Section~\ref{sec:conicoa}, we discuss in more detail what may happen when the assumptions fail, although even in the typical case when they are satisfied, these polyhedra may have exponentially many constraints. Indeed, there are known cases where the OA algorithm requires $2^n$ iterations to converge in $\mathbb{R}^n$. In this section, we review an illustrative case where the OA algorithm performs poorly and the techniques from the literature that have been proposed to address this issue.

Figure~\ref{fig:hijazi} illustrates an example developed by Hijazi et al.~\cite{Hijazi}, specifically the problem,
\begin{align}\label{eq:hijazi}
\min_x\quad & c^Tx  \notag \\
\text{s.t.}\quad & \sum_{i=1}^n \left(x_i - \frac{1}{2}\right)^2 \le \frac{n-1}{4},\\
&x \in \mathbb{Z}^n, 0 \le x \le 1, \notag
\end{align}
which, regardless of the objective vector $c$, has no feasible solutions. Any polyhedral approximation of the single convex constraint, a simple ball, requires $2^n$ half-spaces until the corresponding outer approximation problem~\ref{eq:mioa1} has no feasible solution. At this point the OA algorithm terminates reporting infeasibility.

\begin{figure}[t]
\centering
\includegraphics[width=0.4\textwidth,clip=true,trim={1cm 2cm 2cm 1cm}]{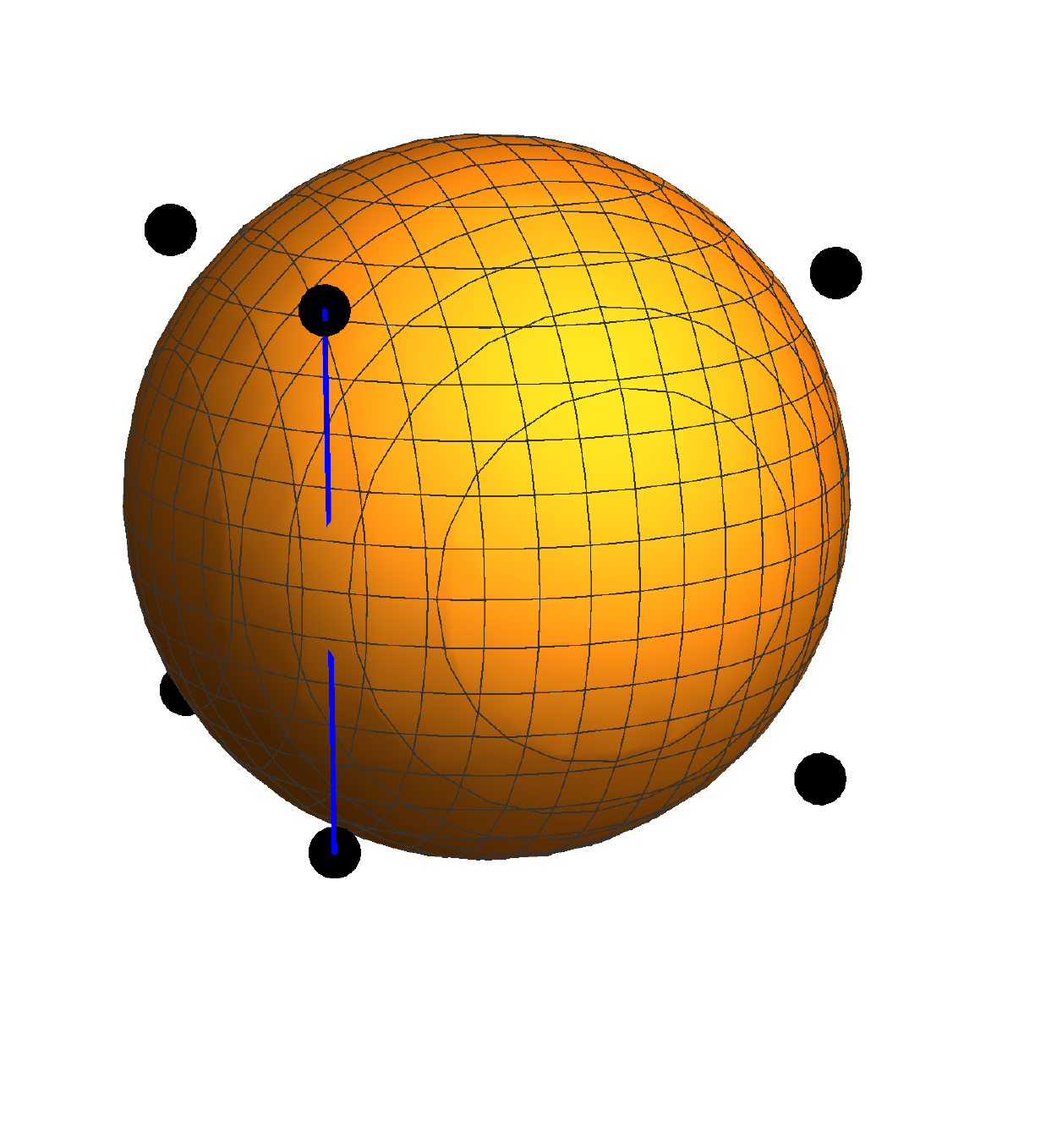}
\caption{The example developed by Hijazi et al.~\cite{Hijazi} demonstrating the case where the outer approximation algorithm requires $2^n$ iterations to converge in dimension $n$. The intersection of the ball with the integer lattice points (in black) is empty, yet any polyhedral outer approximation of the ball in $\mathbb{R}^n$ requires $2^n$ hyperplanes before it has an empty intersection with the integer lattice, because the line segments between any two lattice points (one of which is drawn) intersect the ball. Hence, any hyperplane can separate at most one lattice point from the ball, and we require $2^n$ of these to prove infeasibility. }\label{fig:hijazi}
\end{figure}

Hijazi et al. propose a simple yet powerful reformulation that addresses this poor convergence behavior. To motivate their reformulation, we recall a basic example from linear programming. The $\ell_1$ unit ball, i.e., $\{ x \in \mathbb{R}^n : \sum_{i=1}^n |x_i| \le 1 \}$, is representable as an intersection of half spaces in $\mathbb{R}^n$, namely the $2^n$ half spaces of the form $\sum_{i=1}^n s_ix_i \le 1$ where $s_i = \pm 1$. This exponentially large representation of the $\ell_1$ ball is seldom used in practice, however. Instead, it is common to introduce extra variables $z_i$ with constraints
\begin{equation}\label{eq:extabs}
z_i \ge x_i, z_i \ge -x_i \text{ for } i = 1, \ldots, n \text{ and } \sum_{i=1}^n z_i \le 1.
\end{equation}
It is not difficult to show that $||x||_1 \le 1$ if and only if there exist $z$ satisfying the constraints~\eqref{eq:extabs}. Note that these $2n+1$ constraints define a polyhedron in $\mathbb{R}^{2n}$, which we call an \textit{extended formulation} of the $\ell_1$ ball because the $\ell_1$ ball is precisely the projection of this polyhedron defined in $(x,z)$ space onto the space of $x$ variables. It is well known that polyhedra, such as the $\ell_1$ ball, which require a large description as half-spaces in $\mathbb{R}^n$ might require many fewer half-spaces to represent if additional variables are introduced~\cite{kippmartin}. Note, in this case, that the extended formulation is derived by introducing a variable $z_i$ to represent the epigraph $\{ (z,x) : |x| \le z \}$ of each $|x_i|$ term, taking advantage of the fact that the $\ell_1$ ball can be represented as a constraint on a sum of these univariate functions.

The solution proposed by Hijazi et al. and earlier by Tawarmalani and Sahinidis~\cite{Baron} follows this line of reasoning by introducing an extended formulation for the polyhedral representation of the smooth $\ell_2$ ball. Analogously to the case of the $\ell_1$ ball, Hijazi et al. construct an outer-approximating polyhedron in $\mathbb{R}^{2n}$ with $2n+1$ constraints which contains no integer points. By the previous discussion, we know that the projection of this small polyhedron in $\mathbb{R}^{2n}$ must have at least $2^n$ inequalities in $\mathbb{R}^n$. Their solution precisely exploits the separability structure in the definition of the $\ell_2$ ball, introducing an extra variable $z_i$ for each term and solving instead
\begin{align}\label{eq:hijaziext}
\min_{x,z}\quad & c^Tx  \notag \\
\text{s.t.}\quad & \sum_{i=1}^n z_i  \le \frac{n-1}{4},\\
& z_i \ge \left(x_i - \frac{1}{2}\right)^2, \quad \forall\, i=1,\ldots,n \notag\\
&x \in \mathbb{Z}^n, 0 \le x \le 1. \notag
\end{align}

The OA algorithm applied to~\eqref{eq:hijaziext} proves infeasibility in $2$ iterations because it constructs polyhedral approximations (based on gradient inequalities~\eqref{eq:kkt}) to the constraints in the $(x,z)$ space. More generally, Hijazi et al. and  Tawarmalani and Sahinidis propose to reformulate any convex constraint of the form $\sum_i f_i(x_i) \le k$ as $\sum_i z_i \le k$ and $z_i \ge f_i(x_i)$ for each $i$ where $f_i$ are univariate convex functions. Just by performing this simple transformation before providing the problem to the OA algorithm, they are able to achieve impressive computational gains in reducing the time to solution and number of iterations of the algorithm.

Building on the ideas of Hijazi et al. and Tawarmalani and Sahinidis, Vielma et al.~\cite{VielmaExtendedFormulations} propose an extended formulation for the \textit{second-order cone} $\{ (t,x) \in \mathbb{R}^{n+1} : ||x||_2 \le t \}$, which is not immediately representable as a sum of univariate convex functions. They recognize that the second-order cone is indeed representable as a sum of bivariate convex functions, i.e., $\sum_i \frac{x_i^2}{t} \le t$, after squaring both sides and dividing by $t$. They obtain an extended formulation by introducing auxiliary variables $z_i \ge \frac{x_i^2}{t}$ and constrain $\sum_i z_i \le t$. This simple transformation was subsequently implemented by commercial solvers for \mbox{MISOCP} like Gurobi~\cite{gurobidisagg}, CPLEX~\cite{cplexdisagg}, and Xpress~\cite{xpresssocp}, yielding significant improvements on their internal and public benchmarks.

In spite of the promising computational results of Hijazi et al.\ first reported in 2011 and the more recent extension by Vielma et al., to our knowledge, MINOTAUR~\cite{minotaur} is the only general MICP solver which has implemented these techniques in an automated way. To understand why others like Bonmin~\cite{Bonmin} have not done so, it is important to realize that MICP solvers historically have had no concept of the mathematical or algebraic structure behind their constraints, instead viewing them through black-box oracles to query first-order and possibly second-order derivative values. The summation structure we 
%seek to 
exploit, which is algebraic in nature, is simply not available when viewed through this form, making it quite difficult to retrofit this functionality into the existing architectures of MICP solvers. In the following section, we will propose a substantially different representation of mixed-integer convex optimization problems that is a natural fit for extended formulations.

\section{Disciplined Convex Programming (DCP) as a solution}\label{sec:dcp}

In order to implement the extended formulation proposal of~\cite{Hijazi} in an automated way, one may be led to attempt a direct analysis of a user's algebraic representation of the convex constraints in a problem. However, this approach is far from straightforward. First of all, the problem of \textit{convexity detection} is necessary as a subroutine, because it is only correct to exploit summation structure of a convex function $h(x) = f(x) + g(x)$ when \textit{both} $f$ and $g$ are convex. This is not a necessary condition for the convexity of $h$; consider $f(x) = x_1^2 - x_2^2$ and $g(x) = 2x_2^2$. Convexity detection of algebraic expressions is NP-Hard~\cite{ConvexityNPHard}, which poses challenges for implementing such an approach in a reliable and scalable way. \textit{Ad-hoc} approaches~\cite{DrAmpl} are possible but are highly sensitive to the form in which the user inputs the problem; for example, approaches based on composition rules fail to recognize convexity of $\sqrt{x_1^2 + x_2^2}$ and $\log(\exp(x_1)+\exp(x_2))$~\cite{Baron}.

Instead of attempting such analyses of arbitrary algebraic representations of convex functions, we propose to use the modeling concept of disciplined convex programming (DCP) first proposed by Grant, Boyd, and Ye~\cite{DCP,gb08}. In short, DCP solves the problem of convexity detection by asking users to express convex constraints in such a way that convexity is proven by composition rules, which are sufficient but not necessary. These composition rules are those from basic convex analysis, for example, the sum of convex functions is convex, the point-wise maximum of convex functions is convex, and the composition $f(g(x))$ is convex when $f$ is convex and nondecreasing and $g$ is convex. The full set of DCP rules is reviewed in~\cite{DCP,dccp}.

Even though it is possible to write down convex functions which do not satisfy these composition rules, the DCP philosophy is to disallow them and instead introduce new \textit{atoms} (or basic operations) which users must use when writing down their model. For example, $\operatorname{logsumexp}(\left[\begin{array}{cc} x_1 & x_2 \end{array}\right])$ replaces \\ $\log(\exp(x_1)+\exp(x_2))$ and
$\operatorname{norm}(\left[\begin{array}{cc} x_1 & x_2 \end{array}\right])$ replaces $\sqrt{x_1^2 + x_2^2}$.
Although asking users to express their optimization problems in this form breaks away from the traditional setting of MICP, DCP also formalizes the folklore within the MICP community that the way in which you write down the convex constraints can have a significant impact on the solution time; see, e.g., Hijazi et al.~\cite{Hijazi} and our example later discussed in Equation~\ref{eq:tls}.

The success over the past decade of the CVX software package~\cite{cvx} which implements DCP has demonstrated that this modeling concept is practical. Users are willing to learn the rules of DCP in order to gain access to powerful (continuous, convex) solvers, and furthermore the number of basic atoms needed to cover nearly all convex optimization problems of practical interest is relatively small. 

Although we motivated DCP as a solution to the subproblem of convexity detection, it in fact provides a complete solution to the problem of automatically generating an extended formulation and encoding it in a computationally convenient form given a user's algebraic representation of a problem. 
All DCP-valid expressions are compositions of basic operations (atoms); for example the expression $\max\{\exp(x^2),-2x\}$ is DCP-valid because the basic composition rules prove its convexity. A lesser-known aspect of DCP is that these rules of composition have a 1-1 correspondence with extended formulations based on the epigraphs of the atoms. Observe, for example, that 
\begin{equation}\label{eq:cvxepi}
t \ge \max\{\exp(x^2),-2x\}
\end{equation}
if and only if
\begin{equation}
t \ge \exp(x^2), t \ge -2x
\end{equation}
if and only if there exists $s$ such that
\begin{equation}\label{eq:cvxext}
s \ge x^2, t \ge \exp(s), t \ge -2x,
\end{equation}
where the validity of the latter transformation holds precisely because $\exp(\cdot)$ is increasing and therefore $s \ge x^2$ implies $\exp(s) \ge \exp(x^2)$. Furthermore, the constraints $s \ge x^2$ and $t \ge \exp(s)$ are convex because square and $\exp$ are convex functions; hence~\eqref{eq:cvxext} is a convex extended formulation of~\eqref{eq:cvxepi}. Note that while we previously discussed extended formulations derived only from disaggregating sums, disaggregating compositions of functions in this form also yields stronger polyhedral approximations~\cite{Baron}. The existence of this extended formulation is no coincidence. Grant and Boyd~\cite{gb08} explain that a tractable representation of the epigraph of an atom is sufficient to incorporate it into a DCP modeling framework. That is, if an implementation of DCP knows how to optimize over a model with the constraint $t \ge f(x)$ for some convex function $f$, then $f$ can be incorporated as an atom within the DCP framework and used within much more complex expressions so long as they follow the DCP composition rules.

Our analysis of DCP has led us to the 
conclusion that DCP provides the means to automate the generation of extended formulations in a way that has never been done in the context of MICP. Users need only express their MICP problem by using a DCP modeling language like CVX or more recent implementations like CVXPY~\cite{cvxpy} (in Python), or Convex.jl~\cite{convexjl} (in Julia). Any DCP-compatible model is convex by construction and emits an extended formulation that can safely disaggregate sums and more complex compositions of functions.

We do note that in some cases it may not be obvious how to write a known convex function in DCP form. In our work described in~\cite{IPCO} where we translated MICP benchmark instances into DCP form, we were unable to find a DCP representation of the univariate concave function $\frac{x}{x+1}$ which is not in DCP form because division of affine expressions is neither convex nor concave in general. Fortunately, a reviewer suggested rewriting $\frac{x}{x+1} = 1- \frac{1}{x+1}$ where $\frac{1}{x+1}$ is a DCP-recognized convex function so long as $x+1 \ge 0$. With this trick we were able to translate \textit{all} of the benchmark instances we considered into DCP form, as we discuss in more details in the following section.

\section{MIDCP and conic representability}\label{sec:micone}

While DCP modeling languages have traditionally supported only convex problems, CVX recently added support for mixed-integer convex problems under the name of MI\textbf{D}CP, and the subsequently-developed DCP modeling languages also support integer constraints. We will use the terminology MIDCP to refer to MICP models expressed in DCP form. In the previous section we argued that an MIDCP representation of an MICP problem provides sufficient information to construct an extended formulation, which in turn could be used to accelerate the convergence of the outer approximation algorithm by providing strong polyhedral approximations. However, an MIDCP representation is quite complex, much more so than the ``black-box'' derivative-based representation that traditional MICP solvers work with. Handling the MIDCP form requires understanding each atom within the DCP library and manipulating the \textit{expression graph} data structures which are used to represent the user's algebraic expressions.

It turns out that there is a representation of MIDCP models which is much more compact and convenient for use as an input format for an MICP solver, and this is as \textit{mixed-integer conic} optimization problems. Before stating the form of these problems, we first consider the standard continuous conic optimization problem:
\begin{align}\label{eq:conic}
\min_{x} \quad& c^Tx\notag\\
\text{s.t.}\quad & Ax = b\tag{CONE}\\
& x \in \mathcal{K},\notag
\end{align}
where $\mathcal{K}\subseteq \mathbb{R}^n$ is a closed convex cone, that is, a closed convex set $\mathcal{K}$ where any nonnegative scaling $\alpha x$ of a point $x$ in the set remains in the set. A simple example of a cone is the nonnegative orthant $\mathbb{R}_+^n = \{  x \in \mathbb{R}^n : x \ge 0 \}$. When $\mathcal{K} = \mathbb{R}_+^n$ then~\eqref{eq:conic} reduces to a linear programming problem. 
Typically, $\mathcal{K}$ is a product of cones $\mathcal{K}_1 \times \mathcal{K}_2
 \times \cdots \times \mathcal{K}_r$, where each $\mathcal{K}_i$ is one of a small number of recognized cones.
 
One of Grant et al.'s original motivations for developing the DCP framework was to provide access to powerful solvers for the second-order cone~\cite{SOCPApplications}, 
\begin{align}
\text{SOC}_n = \{ (t,x) \in \mathbb{R}^n : ||x|| \le t \},
\end{align}
and the cone of positive semidefinite matrices,
\begin{align}
    \text{PSD}_n = \{ A \in \mathbb{R}^{n\times n} : A = A^T, x^TAx \ge 0\, \forall\, x \in \mathbb{R}^n \}.
\end{align}
CVX, for example, does \textit{not} use smooth, derivative-based representations of the epigraphs of atoms but instead uses a conic representation of each of its atoms. For instance, for $x,y \ge 0$ the epigraph of the negated geometric mean $f(x,y) = -\sqrt{xy}$ is a convex set representable as $t \ge -\sqrt{xy}$ iff $\exists\, z \ge 0$ such that $-t \le z \le \sqrt{xy}$ iff
\begin{equation}
-t \le z \text{ and } z^2 \le xy \text{ iff } -t \le z \text{ and } (x/\sqrt{2},y/\sqrt{2},z) \in \text{RSOC}_3,
\end{equation}
where
\begin{equation}\label{eq:rsoc}
\text{RSOC}_n := \{ (x,y,z) \in \mathbb{R}\times\mathbb{R}\times\mathbb{R}^{n-2} : 2xy \ge ||z||_2^2, x \ge 0, y \ge 0 \}
\end{equation}
is the $n$-dimensional \textit{rotated} second-order cone, a common cone useful for modeling (e.g., also for functions like $x^2$) which itself is representable as a transformation of the second-order cone~\cite{lectures}. While this conic representation of the geometric mean is known in the literature~\cite{lectures}, it is arguably unnecessarily complex for modelers to understand, and CVX, for example, provides a \texttt{geo\_mean} atom which transparently handles this transformation. 

Subsequent to the second-order and positive semidefinite cones, researchers have investigated the exponential cone~\cite{akle},
\begin{align}
    \text{EXP} = \operatorname{cl}\{ (x,y,z) \in \mathbb{R}^3: y\exp(x/y) \le z, y > 0 \},
\end{align}
and the power cone~\cite{powercone},
\begin{align}
\text{POW}_\alpha = \{ (x,y,z) \in \mathbb{R}^3 : |z| \le x^\alpha y^{1-\alpha}, x \ge 0, y \ge 0\},
\end{align}
which can be used to represent functions like entropy ($-x\log(x)$) and fractional powers, respectively. This small collection of cones is sufficient to represent any convex optimization problem which you may input within existing DCP implementations, including CVX.

In the context of MICP, these cones are indeed sufficient from our experience. We classified all 333 MICP instances from the MINLPLIB2 benchmark library~\cite{MINLPLIB} and found that 217 are representable by using purely second-order cones (and so fall under the previously mentioned MISOCP problem class), 107 are representable by using purely exponential cones, and the remaining by some mix of second-order, exponential, and power cones. We refer readers to~\cite{IPCO} for an extended discussion of conic representability. Of particular note are the trimloss~\cite{Harjunkoski} family of instances which have constraints of the form,
 \begin{equation}\label{eq:tls}
\sum\nolimits_{k=1}^q -\sqrt{x_k y_k} \le c^Tz + b.
\end{equation}
Prior to our report in~\cite{IPCO}, the \texttt{tls5} and \texttt{tls6} instances had been unsolved since 2001. By directly rewriting these problems into MIDCP form, we obtained an MISOCP representation because all constraints are representable by using second-order cones, precisely by using the transformation of the geometric mean discussed above. Once in MISOCP form, we provided the problem to Gurobi 6.0, which was able to solve them to global optimality within a day, indicating the value of conic formulations.

Given that DCP provides an infrastructure to translate convex problems into conic form, we may consider mixed-integer conic problems as a compact representation of MIDCP problems. Below, we state our standard form for mixed-integer conic problems,
\begin{align}\label{eq:miconic}
\min_{x,z} \quad& c^Tz\notag\\
\text{s.t.}\quad & A_xx + A_zz = b\tag{MICONE}\\
& L \le x \le U, x \in \mathbb{Z}^n, z \in \mathcal{K},\notag
\end{align}
where $\mathcal{K}\subseteq \mathbb{R}^k$ is a closed convex cone. Without loss of generality, we assume integer variables are not restricted to cones, since we may introduce corresponding continuous variables by equality constraints. In Section~\ref{sec:conicoa} we discuss solving~\eqref{eq:miconic} via polyhedral outer approximation.

\section{Outer approximation algorithm for mixed-integer conic problems}\label{sec:conicoa}

The observations of the previous section motivated the development of
a solver for problems of the form~\eqref{eq:miconic}. In~\cite{IPCO} we developed the first outer-approximation algorithm with finite-time convergence guarantees for such problems. We note that the traditional convergence theory is generally insufficient because it assumes differentiability, while conic problems have nondifferentiability that is sometimes intrinsic to the model. Nonsmooth perspective functions like $f(x,y) = x^2/y$, for example, which are used in disjunctive convex optimization~\cite{CeriaSoares}, have been particularly challenging for derivative-based MICP solvers and have motivated smooth approximations~\cite{perspective}. On the other hand, conic form can handle these nonsmooth functions in a natural way, so long as there is a solver capable of solving the continuous conic relaxations.

In this section, we provide an overview of the key points of the algorithm and refer readers to~\cite{IPCO} for the full description. The finite-time convergence guarantees of the outer approximation algorithm depend on an assumption that strong duality holds in certain convex subproblems. Extending~\cite{IPCO}, we include a discussion on what may happen when this assumption does not hold.

We begin with the definition of dual cones.

\begin{definition}
Given a cone $\mathcal{K}$, we define $\mathcal{K}^* := \{ \beta \in \mathbb{R}^k : \beta^Tz \ge 0 \,\, \forall z \in \mathcal{K}\}$ as the dual cone of $\mathcal{K}$.
\end{definition}

Dual cones provide an equivalent outer description of any closed, convex cone, as the following lemma states. We refer readers to~\cite{lectures} for the proof.

\begin{lemma}
Let $\mathcal{K}$ be a closed, convex cone. Then $z \in \mathcal{K}$ iff $z^T\beta \ge 0\,\, \forall \beta \in \mathcal{K}^*$.
\end{lemma}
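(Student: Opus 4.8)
The statement to prove is the bi-duality characterization: for a closed convex cone $\mathcal{K}$, we have $z \in \mathcal{K}$ if and only if $z^T\beta \ge 0$ for all $\beta \in \mathcal{K}^*$. In other words, $(\mathcal{K}^*)^* = \mathcal{K}$. The plan is to prove the two inclusions separately. The forward direction ($\subseteq$) is immediate from the definition: if $z \in \mathcal{K}$, then by the very definition of $\mathcal{K}^* = \{\beta : \beta^Tw \ge 0 \ \forall w \in \mathcal{K}\}$, every $\beta \in \mathcal{K}^*$ satisfies $\beta^Tz \ge 0$, hence $z \in (\mathcal{K}^*)^*$. This requires no topology and no convexity.

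The reverse direction ($\supseteq$) is where the work lies, and it is a standard separating-hyperplane argument. Suppose for contradiction that $z \notin \mathcal{K}$ but $z^T\beta \ge 0$ for all $\beta \in \mathcal{K}^*$. Since $\mathcal{K}$ is closed and convex and $z$ is a point outside it, I would invoke the strict separating hyperplane theorem (separation of a point from a closed convex set) to obtain a vector $a \in \mathbb{R}^k$ and a scalar $\gamma$ with $a^Tz < \gamma$ and $a^Tw \ge \gamma$ for all $w \in \mathcal{K}$. The key step is then to argue that, because $\mathcal{K}$ is a cone, one can take $\gamma = 0$: since $0 \in \mathcal{K}$ we get $\gamma \le 0$, and if some $w_0 \in \mathcal{K}$ had $a^Tw_0 < 0$, then scaling $t w_0 \in \mathcal{K}$ for $t \to \infty$ would drive $a^T(tw_0) \to -\infty$, violating the lower bound; hence $a^Tw \ge 0$ for all $w \in \mathcal{K}$, i.e., $a \in \mathcal{K}^*$. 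But then $a^Tz < \gamma \le 0$ contradicts the assumption $z^T\beta \ge 0$ for all $\beta \in \mathcal{K}^*$ applied to $\beta = a$. This contradiction establishes $z \in \mathcal{K}$.

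The main obstacle, such as it is, is purely bookkeeping around the separating hyperplane: one must use the version that strictly separates a point from a \emph{closed} convex set (which is exactly where closedness of $\mathcal{K}$ is used — the result is false for non-closed cones), and then carefully exploit homogeneity to normalize the separating functional into the dual cone. Since the excerpt explicitly defers the proof to a reference (``We refer readers to~\cite{lectures} for the proof''), in the paper itself I would not reproduce this argument in full; the sketch above is the argument that reference contains. No heavy machinery beyond finite-dimensional convex separation is needed.
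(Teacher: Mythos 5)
Your proof is correct. The paper itself gives no proof of this lemma---it explicitly defers to its cited reference---and your argument (the trivial forward inclusion from the definition of $\mathcal{K}^*$, followed by strict separation of $z$ from the closed convex cone $\mathcal{K}$ and the homogeneity argument forcing the separating functional into $\mathcal{K}^*$ with threshold $\gamma \le 0$) is precisely the standard bi-dual cone proof that such references contain, so there is nothing to compare against and nothing to fault.
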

We note that the second-order cone $\text{SOC}_n$, the rotated second order cone $\text{RSOC}_n$~\eqref{eq:rsoc}, and the cone of positive semidefinite matrices are \textit{self-dual}, which means that the dual cone and the original cone are the same~\cite{lectures}. While the exponential and power cones are not self-dual, the discussions that follow are valid for them and other general cones.

Based on the above lemma, we state the analogue of the MILP relaxation\\~\ref{eq:mioa1} for~\eqref{eq:miconic} as
\begin{align}\label{eq:mioa}
\min_{x,z} \quad& c^Tz\notag\\
\text{s.t.}\quad & A_xx + A_zz = b\tag{MICONEOA(T)}\\
& L \le x \le U, x \in \mathbb{Z}^n,\notag\\
&\beta^Tz \ge 0\,\, \forall \beta \in T.\notag
\end{align}

Note that if $T = \mathcal{K}^*$, \ref{eq:mioa} is an equivalent semi-infinite representation of~\eqref{eq:miconic}. If $T \subset \mathcal{K}^*$ and $|T| < \infty$ then~\ref{eq:mioa} is an MILP outer approximation of~\eqref{eq:miconic} whose objective value is a lower bound on the optimal value of~\eqref{eq:miconic}. In the context of the discussion in Section~\ref{sec:OA}, given $T$, our polyhedral approximation of $\mathcal{K}$ is $P_T = \{ z : \beta^Tz \ge 0\,\,\forall\, \beta \in T\}$, and we explicitly treat the linear equality constraints separately.

In the conic setting, we state the continuous subproblem~\ref{eq:convsub} with integer values fixed as
\begin{align}\label{eq:conic_cont}
v_{x^*} = \min_{z} \quad&  c^Tz\notag\\
\text{s.t.}\quad & A_zz = b - A_x x^*\tag{CONE($x^*$)},\\
&z \in \mathcal{K}\notag.
\end{align}
Using conic duality, we obtain the dual of~\ref{eq:conic_cont} as
\begin{align}\label{eq:conic_cont_dual}
\max_{\beta,\lambda} \quad& \lambda^T(b-A_x x^*)\notag\\
\text{s.t.}\quad & \beta = c - A_z^T\lambda\\
&\beta \in \mathcal{K}^*\notag.
\end{align}

In~\cite{IPCO} we prove that under the assumptions of strong duality, the optimal solutions $\beta$ to the dual problem~\eqref{eq:conic_cont_dual} correspond precisely to the half-spaces which ensure the conditions in Lemma~\ref{lem:oafinite} when \ref{eq:conic_cont} is feasible; hence, we add these solutions to the set $T$.  When~\ref{eq:conic_cont} is infeasible and~\eqref{eq:conic_cont_dual} is unbounded, the rays of~\eqref{eq:conic_cont_dual} provide solutions that satisfy~\eqref{eq:infeascase}, guaranteeing finite convergence of the OA algorithm.

We previously deferred a discussion of what may happen when the assumption of strong duality fails. We now present a negative result for this case. When the assumption of strong duality fails, it may be impossible for the OA algorithm to converge in a finite number of iterations.

Consider the problem adapted from~\cite{hijaziletter},
\begin{equation}
\begin{split}
    \min \quad& z \\
    \text{s.t.}\quad& x = 0, \\
    & (x,y,z) \in \text{RSOC}_3.
    \end{split}
    \label{eq:coneprob}
\end{equation}
Note that $(0,y,z) \in \text{RSOC}_3$ implies $z = 0$, so the optimal value is trivially zero.

The conic dual of this problem is
\begin{equation}
\begin{split}
    \max\quad& 0\\
    \text{s.t.}\quad&(\beta,0,1) \in \text{RSOC}_3,\\
    & \beta \text{ free}.
\end{split}
\label{eq:coneprobdual}
\end{equation}

The dual is infeasible because there is no $\beta$ satisfying $0\beta \ge 1$. So there is no strong duality in this case. The following lemma demonstrates that polyhedral approximations fail \textit{entirely}. The proof is more technical than the rest of the paper but uses only basic results from linear programming and conic duality.

\begin{lemma}\label{claim:counterexample}
There is no polyhedral outer approximation $P_{\text{RSOC}_3} \supset \text{RSOC}_3$ such that the following relaxation of~\eqref{eq:coneprob} is bounded:
\begin{equation}
\begin{split}
    \min\quad& z \\
    \text{\normalfont s.t.}\quad& x = 0, \\
    & (x,y,z) \in P_{\text{RSOC}_3}.
    \end{split}
    \label{eq:OAprob}
\end{equation}
\end{lemma}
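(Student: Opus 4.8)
The plan is to suppose such a polyhedron exists, write it explicitly as $P_{\text{RSOC}_3} = \{(x,y,z) : a_i x + b_i y + c_i z \le d_i,\ i = 1,\dots,m\}$ for finitely many rows, and derive a contradiction by exhibiting an explicit half-line inside $P_{\text{RSOC}_3} \cap \{x = 0\}$ along which $z \to -\infty$. (We may take all defining constraints to be inequalities: an equality would force the full-dimensional set $\text{RSOC}_3$ into a hyperplane.) Concretely, I would look for a single scalar $t \ge 0$ such that $(0, \lambda t, -\lambda) \in P_{\text{RSOC}_3}$ for every $\lambda \ge 0$; since $(0,0,0) \in \text{RSOC}_3 \subseteq P_{\text{RSOC}_3}$ gives $d_i \ge 0$, this reduces to showing $b_i t - c_i \le 0$ for each $i$, after which letting $\lambda \to \infty$ drives the objective $z = -\lambda$ to $-\infty$ and contradicts boundedness of \eqref{eq:OAprob}.

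To build such a $t$ I would extract structural sign conditions on the coefficients from $\text{RSOC}_3 \subseteq P_{\text{RSOC}_3}$. Evaluating the $i$-th inequality along the rays $(\tau,0,0)$ and $(0,\tau,0)$, $\tau \ge 0$ (both in $\text{RSOC}_3$), and letting $\tau \to \infty$ yields $a_i \le 0$ and $b_i \le 0$. The essential step, and the only one using both the finiteness of $m$ and the curvature of the rotated cone, is the implication $b_i = 0 \Rightarrow c_i = 0$: plugging the boundary points $(1, z^2/2, z)$ --- which lie in $\text{RSOC}_3$ for all $z \in \mathbb{R}$, since $2\cdot 1\cdot (z^2/2) = z^2$ --- into the $i$-th inequality, the quadratic term drops out when $b_i = 0$ and the constraint becomes $a_i + c_i z \le d_i$ for all $z \in \mathbb{R}$, which forces $c_i = 0$.

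The assembly is then routine: partition the indices into $I_0 = \{i : b_i = 0\}$ and $I_- = \{i : b_i < 0\}$, set $t := \max\bigl(0,\ \max_{i \in I_-} c_i/b_i\bigr)$ (reading the inner maximum as $0$ when $I_- = \emptyset$), and check $b_i t - c_i \le 0$ for every $i$ --- trivially for $i \in I_0$, where $b_i = c_i = 0$, and for $i \in I_-$ because $t \ge c_i/b_i$ multiplied by $b_i < 0$ gives $b_i t \le c_i$. Hence $a_i\cdot 0 + b_i(\lambda t) + c_i(-\lambda) = \lambda(b_i t - c_i) \le 0 \le d_i$ for all $\lambda \ge 0$ and all $i$, so $(0,\lambda t,-\lambda)$ is feasible for \eqref{eq:OAprob} with objective value $-\lambda$, and the relaxation is unbounded below.

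I expect the main obstacle to be recognizing that the naive route fails: the recession cone of $\text{RSOC}_3$ intersected with $\{x = 0\}$ is only the nonnegative $y$-axis, which contains no direction of decreasing $z$, so the unboundedness is produced entirely by the gap between $P_{\text{RSOC}_3}$ and $\text{RSOC}_3$. All the real content is in the implication $b_i = 0 \Rightarrow c_i = 0$, which formalizes that finitely many half-spaces cannot wrap around the parabolic boundary $\{(1, z^2/2, z) : z \in \mathbb{R}\}$ of the slice $\text{RSOC}_3 \cap \{x = 1\}$; the sign conditions, the degenerate case $I_- = \emptyset$, and the reduction to inequality constraints are bookkeeping.
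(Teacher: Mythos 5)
Your proof is correct, but it takes a genuinely different route from the paper's. The paper normalizes the right-hand sides to zero (using that $\text{RSOC}_3$ is a cone), invokes LP duality to say that boundedness of \eqref{eq:OAprob} is equivalent to feasibility of the LP dual, observes that each row of a valid homogeneous outer approximation must lie in $(\text{RSOC}_3)^* = \text{RSOC}_3$ by self-duality, and then derives the contradiction abstractly: the dual feasibility conditions force the nonnegative combination $(\alpha^TA_x,\alpha^TA_y,\alpha^TA_z)=(\beta,0,1)$ to lie in $\text{RSOC}_3$, which is impossible since $2\beta\cdot 0 \ge 1$ fails. You instead argue entirely on the primal side: you extract sign conditions $a_i\le 0$, $b_i\le 0$ from the recession directions, prove the key implication $b_i=0\Rightarrow c_i=0$ by running the parabolic boundary curve $(1,z^2/2,z)$ through each inequality, and then explicitly assemble an unbounded feasible ray $(0,\lambda t,-\lambda)$ with $t=\max\bigl(0,\max_{i\in I_-}c_i/b_i\bigr)$. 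All the steps check out (the membership of the test rays and the parabola in $\text{RSOC}_3$, the sign of $d_i$, and the inequality $b_it\le c_i$ for $b_i<0$), and your closing remark that $\text{RSOC}_3\cap\{x=0\}$ recedes only along the nonnegative $y$-axis correctly identifies why the unboundedness must come from the approximation gap rather than from the cone itself. The trade-off: your argument is more elementary and constructive---it exhibits the certificate of unboundedness rather than merely proving the dual infeasible---but it leans on the explicit algebraic description of $\text{RSOC}_3$, whereas the paper's duality argument transfers almost verbatim to any closed convex cone whose continuous conic subproblem has an infeasible dual, which is the generality the surrounding discussion of strong duality is after.
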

\begin{proof}
Let us assume that $\text{RSOC}_3 \subset P_{\text{RSOC}_3} := \{ (x,y,z) : A_x x + A_y y + A_z z \ge 0 \}$ for some vectors $A_x, A_y$, $A_z$. The right-hand side can be taken to be zero because $\text{RSOC}_3$ is a cone. Specifically, positive right-hand-side values are invalid because they would cut off the point $(0,0,0)$, and negative values can be strengthened to zero.  Since~\eqref{eq:OAprob} is a linear programming problem invariant to positive rescaling, it is bounded iff there exists a feasible dual solution $(\beta,\alpha)$ satisfying
\begin{align}
    \alpha^TA_x = \beta,\\
    \alpha^TA_y = 0,\\
    \alpha^TA_z = 1,\\
    \alpha \ge 0.
\end{align}
Suppose, for contradiction, that there exist $(\beta,\alpha)$ satisfying these dual feasibility conditions.
Let $(A_{x,i},A_{y,i},A_{z,i})$ denote the coefficients of the $i$th linear inequality in $P_{\text{RSOC}_3}$. Since $P_{\text{RSOC}_3}$ is a valid outer approximation, we have that
\begin{equation}
    (A_{x,i},A_{y,i},A_{z,i})\cdot(x,y,z) \ge 0,\, \forall (x,y,z) \in \text{RSOC}_3,
\end{equation}
hence $(A_{x,i},A_{y,i},A_{z,i}) \in (\text{RSOC}_3)^* = \text{RSOC}_3$, recalling that $\text{RSOC}_3$ is self-dual. Therefore we have \begin{equation}
(\alpha^TA_x, \alpha^TA_y, \alpha^TA_z) \in \text{RSOC}_3
\end{equation}
for $\alpha \geq 0$. This follows from the fact that the vector, $(\alpha^TA_x, \alpha^TA_y, \alpha^TA_z)$, is a non-negative linear combination of elements of $\text{RSOC}_3$ and $\text{RSOC}_3$ is a convex cone. However, the duality conditions imply $(\beta, 0, 1) \in \text{RSOC}_3$, i.e., $0 \ge 1$, which is a contradiction.
\end{proof}

Lemma~\ref{claim:counterexample} implies that the following MISOCP instance cannot be solved by the OA algorithm:
\begin{equation}
\begin{split}
    \min\quad& z \\
    \text{s.t.}\quad& x = 0, \\
    & (x,y,z) \in \text{RSOC}_3,\\
    & x \in \{0,1\},
    \end{split}
    \label{eq:miconeprob}
\end{equation}
because the optimal value of \textit{any} MILP relaxation will be $-\infty$ while the true optimal objective is $0$, hence the convergence conditions cannot be satisfied.

This example strengthens the observation by~\cite{hijaziletter} that MISOCP solvers may fail when certain constraint qualifications do not hold. In fact, no approach based on straightforward polyhedral approximation can succeed. Very recently, Gally et al.~\cite{gallymisdp} have studied conditions in the context of mixed-integer semidefinite optimization which ensure that strong duality holds when integer values are fixed.

\section{Computational experiments}\label{sec:computation}

In this section we extend the numerical experiments performed in our previous work~\cite{IPCO}. In that work, we introduced Pajarito. Pajarito is an open-source stand-alone Julia solver, now publicly released\footnote{The results reported here are based on Pajarito version 0.1. The latest release, version 0.4, has been almost completely rewritten with significant algorithmic advances, which will be discussed in upcoming work with Chris Coey.} at \url{https://github.com/JuliaOpt/Pajarito.jl}, that heavily relies on the infrastructure of\\ JuMP~\cite{DunningHuchetteLubin2015}. 

Since~\cite{IPCO} we have improved the performance of Pajarito and report revised numerical experiments. We translated 194 convex instances of MINLPLIB2~\cite{MINLPLIB} into Convex.jl~\cite{convexjl}, a DCP algebraic modeling language in Julia which performs automatic transformation into conic form. Our main points of comparison are Bonmin~\cite{Bonmin} using its OA algorithm, SCIP~\cite{scip} using its default LP-based branch-and-cut algorithm, and Artelys Knitro~\cite{knitro} using its default nonlinear branch-and-bound algorithm; all three can be considered state-of-the-art academic or commercial solvers. We further compare our results with CPLEX for MISOCP instances only. All computations were performed on a high-performance cluster at Los Alamos National Laboratory with Intel$^\circledR$ Xeon$^\circledR$ E5-2687W v3 @3.10GHz 25.6MB L3 cache processors and 251GB DDR3 memory installed on every node. CPLEX v12.6.2 is used as a MILP and MISOCP solver. Because conic solvers supporting exponential cones were not sufficiently reliable in our initial experiments, we use Artelys Knitro v9.1.0 to solve all conic subproblems via traditional derivative-based methods.

Bonmin v1.8.3 and SCIP v3.2.0 are both compiled with CPLEX v12.6.2 and Ipopt v3.12.3 using the HSL linear algebra library MA97. All solvers are set to a relative optimality gap of $10^{-5}$, are run on a single thread (both CPLEX and Artelys Knitro), and are given 10 hours of wall time limit (with the exception of \texttt{gams01}, a previously unsolved benchmark instance, where we give 32 threads to CPLEX for the MILP relaxations). The scripts to run these experiments can be found online at \url{https://github.com/mlubin/MICPExperiments}.

\begin{figure}[t]
\centering
\subfigure[Solution time]{
\includegraphics[scale=0.28]{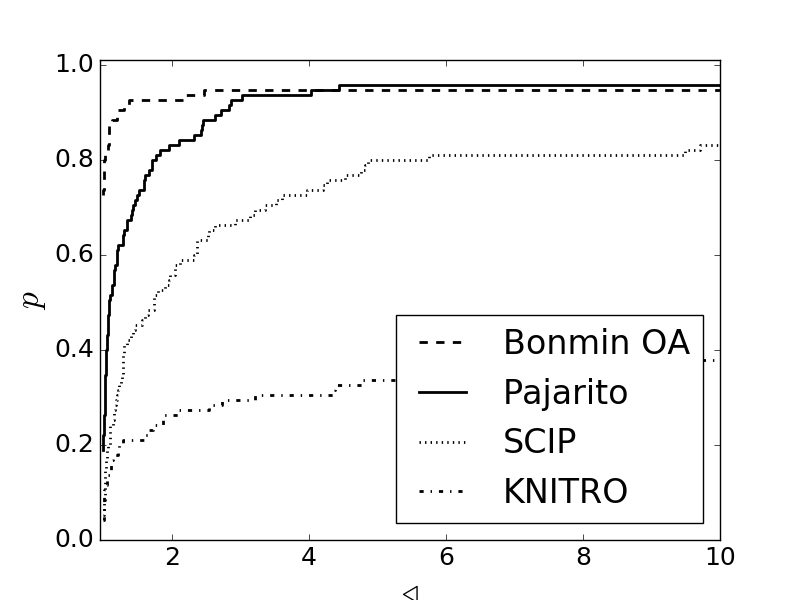}}
\subfigure[Number of OA iterations]{
\includegraphics[scale=0.28]{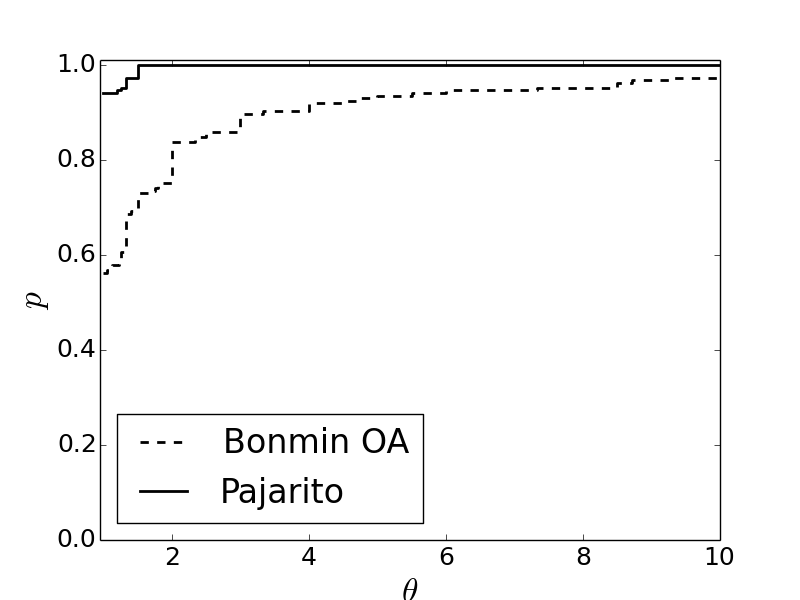}}
\caption{Comparison performance profiles~\cite{perf} (solver performs within a factor of $\theta$ of the best on proportion $p$ of instances) over all instances we tested from the MINLPLIB2 benchmark library. Higher is better. Bonmin is faster than Pajarito often within a small factor, yet Pajarito is able to solve a few more instances overall and with significantly fewer iterations. }
\label{fig:all}
\end{figure}

Numerical experiments indicate that the extended formulation drastically reduces the number of polyhedral OA iterations as expected. In aggregate across the instances we tested, Bonmin requires 2685 iterations while Pajarito requires 994. We list the full results in Tables~\ref{tab:results:1} and \ref{tab:results:2} and summarize them in Figure~\ref{fig:all}. In Figure~\ref{fig:soc} we present results for the subset of SOC-representable instances, where we can compare with commercial MISOCP solvers. In our performance profiles, all times are shifted by 10 seconds to decrease the influence of easy instances.

Notably, Pajarito is able solve a previously unsolved instance, \texttt{gams01}, whose conic representation requires a mix of SOC and EXP cones and hence was not a pure MISOCP problem. The best known bound was 1735.06 and the best known solution was 21516.83. Pajarito solved the instance to optimality with an objective value of 21380.20 in 6 iterations.

\begin{figure}[t]
\centering
\includegraphics[scale=0.4]{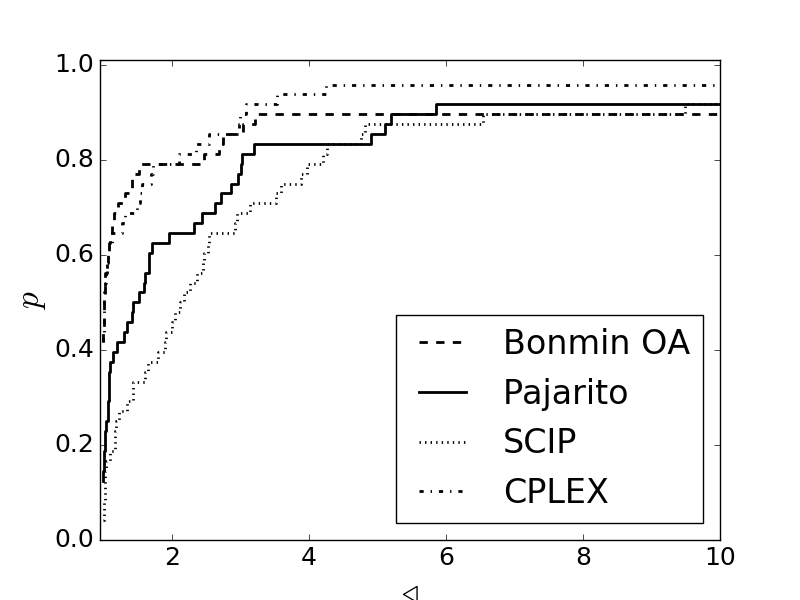}
\caption{Performance profile~\cite{perf} (solver is the fastest within a factor of $\theta$ of the best on proportion $p$ of instances) over the instances representable as mixed-integer second-order cone problems where we can compare with the commercial CPLEX solver. Higher is better. CPLEX is the best overall, since notably it already implements the extended formulation proposed by Vielma et al.~\cite{VielmaExtendedFormulations}.}
\label{fig:soc}
\end{figure}

 \begin{table}[t]
 \tiny
 \centering
 \begin{tabular}{lcrrrrrrr}
 %\toprule
 & & \multicolumn{2}{c}{Pajarito} & \multicolumn{2}{c}{Bonmin} & SCIP & Knitro & CPLEX \\
   Instance & Conic rep. & Iter & Time & Iter & Time & Time & Time & Time \\
  \hline
batch & Exp & 1 &     0.26 & 2 &     0.60 &     0.66 &     0.56 & -- \\ 
batchdes & Exp & 1 &     0.11 & 1 &     0.07 &     0.16 &     0.02 & -- \\ 
batchs101006m & Exp & 3 &     3.26 & 10 &     1.88 &     5.10 &    76.96 & -- \\ 
batchs121208m & Exp & 3 &     6.74 & 4 &     3.14 &    13.09 &   316.14 & -- \\ 
batchs151208m & Exp & 3 &    10.72 & 6 &     7.97 &    16.90 &   516.04 & -- \\ 
batchs201210m & Exp & 2 &    25.14 & 8 &    14.92 &    29.12 &   970.51 & -- \\ 
clay0203h & SOC & 5 &     1.42 & 9 &     0.90 &     0.70 &     1.28 &     0.35 \\ 
clay0203m & SOC & 6 &     1.61 & 10 &     0.40 &     0.86 &     0.34 &     0.37 \\ 
clay0204h & SOC & 1 &     1.85 & 3 &     3.60 &     7.14 &     5.72 &     1.61 \\ 
clay0204m & SOC & 1 &     0.55 & 3 &     0.33 &     2.55 &     3.30 &     1.02 \\ 
clay0205h & SOC & 3 &    24.40 & 4 &    20.89 &    78.19 &   168.28 &     8.93 \\ 
clay0205m & SOC & 3 &     8.11 & 6 &     5.50 &     9.63 &    61.91 &     1.77 \\ 
clay0303h & SOC & 5 &     2.41 & 9 &     0.97 &     1.53 &     1.96 &     0.54 \\ 
clay0303m & SOC & 7 &     2.60 & 10 &     0.58 &     1.73 &     0.76 &     0.68 \\ 
clay0304h & SOC & 9 &    13.87 & 11 &     5.27 &     2.50 &    26.33 &     1.42 \\ 
clay0304m & SOC & 13 &    18.97 & 16 &     2.84 &     7.09 &     7.20 &     2.13 \\ 
clay0305h & SOC & 3 &    52.97 & 4 &    23.81 &     1.97 &   139.27 &    23.32 \\ 
clay0305m & SOC & 3 &    11.83 & 7 &     6.16 &    12.90 &    52.53 &     2.51 \\ 
du-opt & SOC & 7 &     3.19 & 61 &     0.76 & \textgreater 36000 &     0.11 &     1.54 \\ 
du-opt5 & SOC & 4 &     1.55 & 22 &     0.22 &     0.75 &     0.11 &     1.97 \\ 
enpro48pb & Exp & 1 &     0.51 & 2 &     0.22 &     1.73 &     0.85 & -- \\ 
enpro56pb & Exp & 1 &     0.60 & 1 &     0.22 &     1.52 &     4.47 & -- \\ 
ex1223 & ExpSOC & 1 &     0.06 & 3 &     0.07 &     0.14 &     0.03 & -- \\ 
ex1223a & SOC & 0 &     0.02 & 1 &     0.03 &     0.11 &     0.02 &     0.01 \\ 
ex1223b & ExpSOC & 1 &     0.08 & 3 &     0.07 &     0.15 &     0.02 & -- \\ 
ex4 & SOC & 2 &     1.06 & 2 &     0.13 &     1.15 &     0.25 &     0.86 \\ 
fac3 & SOC & 2 &     0.19 & 6 &     0.15 &     0.24 &     0.16 &     0.07 \\ 
netmod\_dol2 & SOC & 7 &    49.97 & 33 &   167.49 &    33.93 &   293.76 &    12.58 \\ 
netmod\_kar1 & SOC & 12 &     8.05 & 102 &    56.45 &     3.32 &   122.98 &     7.68 \\ 
netmod\_kar2 & SOC & 12 &     8.14 & 102 &    56.35 &     3.30 &   122.28 &     7.66 \\ 
no7\_ar25\_1 & SOC & 3 &    67.97 & 2 &    25.19 &    82.09 & 17601.54 &    54.34 \\ 
no7\_ar2\_1 & SOC & 1 &     8.87 & 1 &     7.06 &    31.81 & 14957.66 &    21.83 \\ 
no7\_ar3\_1 & SOC & 3 &    91.36 & 4 &    71.04 &   392.98 & 16495.95 &   126.09 \\ 
no7\_ar4\_1 & SOC & 4 &   107.58 & 5 &    85.87 &   274.72 & 17865.83 &    48.97 \\ 
no7\_ar5\_1 & SOC & 5 &   115.25 & 7 &    69.23 &    68.90 & 17452.47 &    32.60 \\ 
nvs03 & SOC & 1 &     0.03 & 1 &     0.06 &     0.13 &     0.18 &     0.00 \\ 
slay04h & SOC & 2 &     0.32 & 5 &     0.19 &     0.68 &     0.53 &     0.14 \\ 
slay04m & SOC & 2 &     0.17 & 5 &     0.11 &     0.57 &     0.32 &     0.18 \\ 
slay05h & SOC & 3 &     0.65 & 9 &     0.60 &     3.29 &     1.57 &     0.37 \\ 
slay05m & SOC & 3 &     0.28 & 7 &     0.18 &     0.84 &     1.02 &     0.16 \\ 
slay06h & SOC & 2 &     0.76 & 12 &     1.94 &     5.26 &     4.65 &     0.69 \\ 
slay06m & SOC & 2 &     0.32 & 9 &     0.29 &     1.57 &     2.94 &     0.42 \\ 
slay07h & SOC & 3 &     1.75 & 15 &     5.04 &    18.35 &     9.96 &     0.98 \\ 
slay07m & SOC & 3 &     0.56 & 12 &     0.66 &     2.51 &     5.75 &     0.67 \\ 
slay08h & SOC & 3 &     2.65 & 22 &    27.27 &   180.20 &    26.47 &     1.50 \\ 
slay08m & SOC & 2 &     0.58 & 21 &     2.89 &     3.69 &    13.17 &     0.96 \\ 
slay09h & SOC & 3 &     4.36 & 36 &   163.31 &    92.70 &    79.79 &     1.93 \\ 
slay09m & SOC & 3 &     1.11 & 28 &    17.22 &    11.01 &    33.36 &     1.54 \\ 
slay10h & SOC & 4 &    21.94 & 80 &  8155.02 & 11745.37 &   442.46 &     7.55 \\ 
slay10m & SOC & 4 &     4.36 & 77 &  1410.08 &   516.81 &   167.81 &     1.80 \\ 
syn05h & Exp & 1 &     0.07 & 2 &     0.09 &     0.31 &     0.17 & -- \\ 
syn05m & Exp & 1 &     0.04 & 2 &     0.07 &     0.28 &     0.14 & -- \\ 
syn05m02h & Exp & 1 &     0.15 & 1 &     0.06 &     0.33 &     0.11 & -- \\ 
syn05m02m & Exp & 1 &     0.08 & 1 &     0.07 &     0.33 &     0.29 & -- \\ 
syn05m03h & Exp & 1 &     0.23 & 1 &     0.07 &     0.33 &     0.13 & -- \\ 
syn05m03m & Exp & 1 &     0.12 & 1 &     0.07 &     0.32 &     0.30 & -- \\ 
syn05m04h & Exp & 1 &     0.29 & 1 &     0.07 &     0.38 &     0.19 & -- \\ 
syn05m04m & Exp & 1 &     0.17 & 1 &     0.08 &     0.32 &     0.61 & -- \\ 
syn10h & Exp & 0 &     0.10 & 1 &     0.04 &     0.20 &     0.09 & -- \\ 
syn10m & Exp & 1 &     0.08 & 2 &     0.04 &     0.25 &     0.23 & -- \\ 
syn10m02h & Exp & 1 &     0.27 & 1 &     0.09 &     0.46 &     0.21 & -- \\ 
syn10m02m & Exp & 1 &     0.16 & 2 &     0.09 &     0.42 &     3.05 & -- \\ 
syn10m03h & Exp & 1 &     0.38 & 1 &     0.08 &     0.59 &     0.24 & -- \\ 
syn10m03m & Exp & 1 &     0.23 & 1 &     0.08 &     0.54 &    10.47 & -- \\ 
syn10m04h & Exp & 1 &     0.53 & 1 &     0.11 &     0.52 &     0.19 & -- \\ 
syn10m04m & Exp & 1 &     0.34 & 1 &     0.11 &     0.72 &    40.41 & -- \\ 
syn15h & Exp & 1 &     0.22 & 1 &     0.06 &     0.29 &     0.14 & -- \\ 
syn15m & Exp & 1 &     0.10 & 2 &     0.07 &     0.30 &     0.32 & -- \\ 
syn15m02h & Exp & 1 &     0.51 & 1 &     0.09 &     0.47 &     0.18 & -- \\ 
syn15m02m & Exp & 1 &     0.24 & 1 &     0.09 &     0.44 &     5.51 & -- \\ 
syn15m03h & Exp & 1 &    44.15 & 1 &     0.13 &     0.99 &     0.23 & -- \\ 
syn15m03m & Exp & 1 &     0.38 & 2 &     0.11 &     0.66 &    25.67 & -- \\ 
syn15m04h & Exp & 1 &     1.47 & 1 &     0.14 &     1.61 &     0.32 & -- \\ 
syn15m04m & Exp & 1 &     0.50 & 2 &     0.14 &     1.43 &   186.20 & -- \\ 
syn20h & Exp & 2 &     0.33 & 2 &     0.10 &     0.34 &     0.20 & -- \\ 
syn20m & Exp & 1 &     0.13 & 2 &     0.06 &     0.39 &     1.31 & -- \\ 
syn20m02h & Exp & 2 &     1.07 & 2 &     0.15 &     0.57 &     0.41 & -- \\ 
syn20m02m & Exp & 2 &     0.44 & 2 &     0.10 &     0.73 &   381.88 & -- \\ 
syn20m03h & Exp & 1 &     1.21 & 1 &     0.13 &     1.52 &     0.78 & -- \\ 
syn20m03m & Exp & 2 &     0.64 & 2 &     0.15 &     2.00 &   993.73 & -- \\ 
syn20m04h & Exp & 1 &     1.81 & 1 &     0.19 &     2.41 &     1.11 & -- \\ 
syn20m04m & Exp & 2 &     0.91 & 2 &     0.27 &     9.77 &  1806.83 & -- \\ 
syn30h & Exp & 3 &     0.73 & 3 &     0.12 &     0.59 &     0.28 & -- \\ 
syn30m & Exp & 3 &     0.28 & 3 &     0.09 &     0.49 &    90.26 & -- \\ 
syn30m02h & Exp & 3 &     1.77 & 3 &     0.21 &    12.98 &     0.44 & -- \\ 
syn30m02m & Exp & 3 &     0.82 & 4 &     0.19 &     1.67 &  1041.13 & -- \\ 
syn30m03h & Exp & 3 &     2.24 & 3 &     0.40 & 11444.39 &     1.23 & -- \\ 
syn30m03m & Exp & 3 &     1.28 & 3 &     0.27 &     7.78 &  1878.32 & -- \\ 
syn30m04h & Exp & 3 &     3.51 & 3 &     0.49 & \textgreater 36000 &     2.74 & -- \\ 
syn30m04m & Exp & 3 &     1.81 & 4 &     0.42 &    37.94 &  3113.33 & -- \\ 
syn40h & Exp & 3 &     0.92 & 4 &     0.19 &     0.55 &     0.33 & -- \\ 
syn40m & Exp & 2 &     0.35 & 4 &     0.97 &     0.52 &   484.94 & -- \\ 
syn40m02h & Exp & 3 &     2.15 & 3 &     0.31 &  2073.62 &     1.03 & -- \\ 
syn40m02m & Exp & 3 &     1.18 & 3 &     0.24 &     5.74 &  1550.39 & -- \\ 
syn40m03h & Exp & 4 &     4.20 & 4 &     0.59 &     2.88 &     5.27 & -- \\ 
syn40m03m & Exp & 4 &     2.33 & 5 &     0.52 &   204.94 &  2921.63 & -- \\ 
syn40m04h & Exp & 3 &     8.56 & 4 &     1.02 & \textgreater 36000 &    20.31 & -- \\ 
syn40m04m & Exp & 5 &     4.61 & 5 &     0.87 &   974.05 &  8048.34 & -- \\ 
 \end{tabular}
 \vspace{0.3cm}
 \caption{MINLPLIB2 instances. ``Conic rep'' column indicates which cones are used in the conic representation of the instance (second-order cone and/or exponential). CPLEX is capable of solving only second-order cone instances. Times in seconds.}
 \label{tab:results:1}
 \end{table}

 \begin{table}[t]
 \tiny
 \centering
 \begin{tabular}{lcrrrrrrr}
 %\toprule
 & & \multicolumn{2}{c}{Pajarito} & \multicolumn{2}{c}{Bonmin} & SCIP & Knitro & CPLEX \\
   Instance & Conic rep. & Iter & Time & Iter & Time & Time & Time & Time \\
  \hline
synthes1 & Exp & 2 &     0.06 & 3 &     0.04 &     0.24 &     0.11 & -- \\ 
synthes2 & Exp & 2 &     0.07 & 3 &     0.05 &     0.42 &     0.13 & -- \\ 
synthes3 & Exp & 2 &     0.09 & 6 &     0.10 &     0.34 &     0.13 & -- \\ 
rsyn0805h & Exp & 1 &     0.38 & 1 &     0.14 &     0.40 &     1.10 & -- \\ 
rsyn0805m & Exp & 2 &     0.49 & 2 &     0.25 &     0.87 &    53.62 & -- \\ 
rsyn0805m02h & Exp & 5 &     2.38 & 5 &     0.71 &     0.73 &     3.71 & -- \\ 
rsyn0805m02m & Exp & 4 &     2.41 & 4 &     2.16 &    11.21 &  1617.65 & -- \\ 
rsyn0805m03m & Exp & 3 &     3.26 & 3 &     4.08 &    10.71 &  2930.70 & -- \\ 
rsyn0805m04m & Exp & 2 &     2.32 & 2 &     2.31 &    19.17 &  5202.46 & -- \\ 
rsyn0810m & Exp & 1 &     0.37 & 2 &     0.24 &     1.17 &   211.18 & -- \\ 
rsyn0810m02h & Exp & 3 &     1.87 & 3 &     0.58 &     1.61 &     9.79 & -- \\ 
rsyn0810m02m & Exp & 3 &     2.20 & 4 &     5.78 &    49.36 &  3098.62 & -- \\ 
rsyn0810m03h & Exp & 3 &     3.19 & 3 &     1.36 &     1.99 &    26.42 & -- \\ 
rsyn0810m03m & Exp & 3 &     4.29 & 3 &     6.04 &    41.61 &  3582.39 & -- \\ 
rsyn0810m04h & Exp & 2 &     3.54 & 3 &     1.31 &     2.87 &     8.61 & -- \\ 
rsyn0810m04m & Exp & 3 &     3.74 & 4 &     3.77 &    52.17 &  5943.63 & -- \\ 
rsyn0815h & Exp & 1 &    19.15 & 1 &     0.27 &     1.27 &     1.77 & -- \\ 
rsyn0815m & Exp & 2 &     0.49 & 2 &     0.23 &     1.21 &   171.89 & -- \\ 
rsyn0815m02m & Exp & 4 &     2.39 & 5 &     1.94 &    58.70 &  2565.52 & -- \\ 
rsyn0815m03h & Exp & 5 &    11.58 & 5 &     5.21 &    38.80 &    31.62 & -- \\ 
rsyn0815m03m & Exp & 5 &     5.66 & 4 &     4.59 &   217.30 &  3914.97 & -- \\ 
rsyn0815m04h & Exp & 3 &     6.16 & 3 &     2.03 &     4.73 &    20.55 & -- \\ 
rsyn0815m04m & Exp & 3 &     6.40 & 4 &     7.78 &  1609.07 &  7313.05 & -- \\ 
rsyn0820h & Exp & 2 &     1.02 & 3 &     0.42 &     2.04 &     1.55 & -- \\ 
rsyn0820m & Exp & 2 &     0.61 & 2 &     0.24 &     3.74 &   772.36 & -- \\ 
rsyn0820m02h & Exp & 2 &     2.28 & 3 &     0.59 &     2.83 &    90.89 & -- \\ 
rsyn0820m02m & Exp & 3 &     2.27 & 3 &     1.90 &   712.08 &  3138.98 & -- \\ 
rsyn0820m03h & Exp & 2 &     3.55 & 2 &     1.37 &     4.72 &   135.69 & -- \\ 
rsyn0820m03m & Exp & 3 &     4.08 & 3 &     5.14 &  6372.80 &  5220.60 & -- \\ 
rsyn0820m04h & Exp & 4 &     7.75 & 4 &     2.66 &     6.25 &    50.72 & -- \\ 
rsyn0820m04m & Exp & 3 &     7.22 & 3 &     8.65 & 13412.29 &  8314.96 & -- \\ 
rsyn0830h & Exp & 3 &     1.27 & 3 &     0.41 &     2.53 &     2.84 & -- \\ 
rsyn0830m & Exp & 4 &     0.96 & 4 &     0.37 &     3.37 &  1012.27 & -- \\ 
rsyn0830m02m & Exp & 5 &    10.95 & 5 &     1.83 &   131.12 &  9151.72 & -- \\ 
rsyn0830m03h & Exp & 2 &     4.77 & 2 &     1.45 &     6.70 &    59.98 & -- \\ 
rsyn0830m03m & Exp & 4 &     5.79 & 4 &     3.45 &  4044.25 & 10519.40 & -- \\ 
rsyn0830m04h & Exp & 3 &     8.44 & 3 &     2.35 &    14.23 &   209.80 & -- \\ 
rsyn0830m04m & Exp & 4 &    11.62 & 4 &    11.47 & \textgreater 36000 & 12709.29 & -- \\ 
rsyn0840h & Exp & 2 &     1.15 & 2 &     0.30 &     3.22 &     0.94 & -- \\ 
rsyn0840m & Exp & 3 &     0.86 & 2 &     0.26 &     2.96 &  1117.90 & -- \\ 
rsyn0840m02h & Exp & 2 &     2.97 & 3 &     0.72 &     5.10 &     8.43 & -- \\ 
rsyn0840m02m & Exp & 3 &     3.05 & 4 &     1.53 &   675.24 &  4443.70 & -- \\ 
rsyn0840m03h & Exp & 3 &     7.24 & 3 &     1.85 & \textgreater 36000 &    41.84 & -- \\ 
rsyn0840m03m & Exp & 5 &     7.92 & 5 &     2.47 &  4662.04 & 10511.67 & -- \\ 
rsyn0840m04h & Exp & 2 &    40.03 & 2 &     2.40 &    18.71 &   453.32 & -- \\ 
rsyn0840m04m & Exp & 4 &    18.14 & 4 &     7.62 & \textgreater 36000 & 15336.01 & -- \\ 
gbd & SOC & 0 &     0.01 & 1 &     0.04 &     0.19 &     0.12 &     0.00 \\ 
ravempb & Exp & 1 &     0.79 & 4 &     0.33 &     0.80 &     0.42 & -- \\ 
portfol\_classical050\_1 & SOC & 12 &    32.66 & 989 & \textgreater 36000 &   133.43 &   452.49 &     3.31 \\ 
m3 & SOC & 0 &     0.04 & 1 &     0.68 &     0.33 &     0.38 &     0.07 \\ 
m6 & SOC & 1 &     0.39 & 1 &     0.16 &     2.07 &   658.83 &     0.17 \\ 
m7 & SOC & 0 &     0.42 & 1 &     0.59 &     4.99 & 10431.03 &     0.69 \\ 
m7\_ar25\_1 & SOC & 1 &     0.55 & 1 &     0.37 &     1.90 &  2763.66 &     0.16 \\ 
m7\_ar2\_1 & SOC & 1 &     2.47 & 1 &     2.19 &     5.59 & 14002.89 &     1.58 \\ 
m7\_ar3\_1 & SOC & 1 &     2.33 & 1 &     1.88 &     5.53 & 25222.75 &     0.82 \\ 
m7\_ar4\_1 & SOC & 0 &     0.31 & 1 &     0.35 &     2.08 & 20537.24 &     0.84 \\ 
m7\_ar5\_1 & SOC & 0 &     1.30 & 1 &     0.34 &    11.88 & 38924.33 &     0.98 \\ 
fo7 & SOC & 4 &    38.44 & 3 &    27.68 &    89.18 &  3584.70 &    23.67 \\ 
fo7\_2 & SOC & 2 &    12.52 & 2 &    12.52 &    43.35 &  6298.85 &     4.88 \\ 
fo7\_ar25\_1 & SOC & 4 &    22.95 & 4 &     9.87 &    21.94 & 16685.13 &     9.92 \\ 
fo7\_ar2\_1 & SOC & 3 &    15.19 & 2 &     8.68 &    25.56 & 16123.12 &    11.04 \\ 
fo7\_ar3\_1 & SOC & 3 &    27.00 & 3 &    11.61 &    28.79 & 16539.34 &    22.16 \\ 
fo7\_ar4\_1 & SOC & 2 &    11.31 & 2 &     9.61 &    47.19 & 14674.12 &    10.27 \\ 
fo7\_ar5\_1 & SOC & 1 &     4.44 & 1 &     5.66 &    19.63 & 16634.28 &    12.67 \\ 
fo8 & SOC & 3 &    79.22 & 2 &    79.50 &   145.26 &  6383.13 &    52.92 \\ 
fo8\_ar25\_1 & SOC & 4 &   141.68 & 3 &    45.80 &   121.69 & 23823.27 &    63.09 \\ 
fo8\_ar2\_1 & SOC & 4 &   159.12 & 3 &    59.24 &   319.27 & 19979.89 &    60.09 \\ 
fo8\_ar3\_1 & SOC & 1 &    10.34 & 1 &    14.65 &    70.68 & 20336.26 &    37.85 \\ 
fo8\_ar4\_1 & SOC & 1 &    12.03 & 1 &    10.53 &    62.21 & 21961.80 &    62.60 \\ 
fo8\_ar5\_1 & SOC & 1 &    29.66 & 2 &    23.26 &    94.63 & 24442.99 &    59.75 \\ 
fo9 & SOC & 4 &   210.11 & 3 &   534.56 &  2079.40 &  4200.36 &   227.52 \\ 
fo9\_ar25\_1 & SOC & 6 &  6390.32 & 6 &  1430.17 &  2819.53 & 25608.54 &  1240.89 \\ 
fo9\_ar2\_1 & SOC & 2 &   490.08 & 2 &   205.19 &   896.42 & 19595.03 &   631.46 \\ 
fo9\_ar3\_1 & SOC & 1 &    18.55 & 1 &    16.77 &   730.51 & 24190.96 &   103.84 \\ 
fo9\_ar4\_1 & SOC & 1 &    56.32 & 2 &    40.77 &  1440.47 & 32284.58 &   785.75 \\ 
fo9\_ar5\_1 & SOC & 3 &   131.24 & 2 &    39.47 &   724.35 & 30368.10 &   725.60 \\ 
flay02h & SOC & 2 &     0.10 & 2 &     0.09 &     0.26 &     1.37 &     0.02 \\ 
flay02m & SOC & 2 &     0.06 & 2 &     0.05 &     0.15 &     0.10 &     0.04 \\ 
flay03h & SOC & 8 &     0.98 & 8 &     0.40 &     0.62 &     0.30 &     0.20 \\ 
flay03m & SOC & 8 &     0.44 & 8 &     0.17 &     0.26 &     0.14 &     0.24 \\ 
flay04h & SOC & 24 &    23.43 & 24 &    19.92 &     3.75 &     6.60 &     1.14 \\ 
flay04m & SOC & 22 &     8.24 & 22 &     4.43 &     1.98 &     2.54 &     1.00 \\  
flay05h & SOC & 164 &  6709.06 & 181 &  6583.08 &   221.67 &   357.72 &    96.62 \\ 
flay05m & SOC & 171 &  5030.20 & 180 &  3258.45 &    51.94 &   118.96 &    68.91 \\ 
flay06h & SOC & 31 & \textgreater 36000 & 30 & \textgreater 36000 & 13327.17 &   883.97 &  6958.36 \\ 
flay06m & SOC & 56 & \textgreater 36000 & 68 & \textgreater 36000 &  2803.53 &   279.87 &  4752.04 \\ 
o7 & SOC & 8 &  2778.14 & 9 &  1623.33 &  2074.22 &  3060.64 &   526.94 \\ 
o7\_2 & SOC & 5 &   803.25 & 5 &   435.47 &   899.41 &  6423.68 &   128.95 \\ 
o7\_ar25\_1 & SOC & 3 &   421.01 & 4 &   259.10 &   433.72 & 16789.95 &   455.29 \\ 
o7\_ar2\_1 & SOC & 1 &    72.03 & 1 &    41.51 &   209.30 & 15504.16 &    68.66 \\ 
o7\_ar3\_1 & SOC & 3 &  1041.48 & 4 &   338.68 &   874.36 & 17193.08 &   875.63 \\ 
o7\_ar4\_1 & SOC & 7 &  2665.40 & 7 &  1486.87 &  1080.95 & 17803.19 &   535.17 \\ 
o7\_ar5\_1 & SOC & 4 &   662.44 & 4 &   309.86 &   545.20 & 21972.83 &   216.84 \\ 
o8\_ar4\_1 & SOC & 3 &  7192.54 & 4 &  2736.05 &  6939.85 & 26448.75 &  8447.35 \\ 
o9\_ar4\_1 & SOC & 6 & 14143.93 & 5 &  7248.84 & 34990.47 & 31569.13 & 21722.78 \\ 
{\bf gams01} & {\bf ExpSOC} & {\bf 6} & {\bf 23414.37} & {\bf \textgreater 19} & {\bf \textgreater 36000} & {\bf \textgreater 36000} & {\bf \textgreater 36000} & -- \\
 \end{tabular}
 \vspace{0.3cm}
 \caption{MINLPLIB2 instances, continued.}
 \label{tab:results:2}
 \end{table}

\section{Concluding remarks and future work}

In this work, we have presented and advanced the state-of-the-art in polyhedral approximation techniques for mixed-integer convex optimization problems, in particular exploiting the idea of extended formulations and how to generate them automatically by using disciplined convex programming (DCP). We explain why the mixed-integer conic view of mixed-integer convex optimization is surprisingly powerful, precisely because it encodes the extended formulation structure in a compact way. We claim that for the vast majority of problems in practice, conic forms using a small number of recognized cones is a sufficient and superior representation to the traditional smooth ``black box'' view.

Our developments for mixed-integer conic optimization seem to have outpaced the capabilities of existing conic solvers, and we hope that the convex optimization community will continue to develop techniques and publicly available, numerically robust solvers in particular for nonsymmetric cones like the exponential cone. In spite of some numerical troubles when solving the conic subproblems using existing solvers, our new mixed-integer conic solver, Pajarito, has displayed superior performance in many cases to state-of-the-art solvers like Bonmin, including the solution of previously unsolved benchmark problems.

This work has opened up a number of promising directions which we are currently pursuing. In the near term we plan on composing a rigorous report on the technical aspects of implementing the outer-approximation algorithm for mixed-integer conic problems, including aspects we have omitted which are important for the reliability and stability of Pajarito. These will include a larger set of benchmark instances and experiments with a branch-and-cut variant of the algorithm.

We intend to investigate the application of polyhedral approximation in the context of mixed-integer semidefinite optimization, where we expect that failures in strong duality could be a common occurrence based on the reports of~\cite{gallymisdp}. It remains an open question what guidance we can provide to modelers on how to avoid cases where polyhedral approximation can fail, or even if this could be resolved automatically at the level of DCP.

Finally, we note that neither the DCP representation of a problem nor the conic representation of a DCP atom is necessarily unique. Understanding the effects of different formulation choices is an important avenue for future work.

\begin{acknowledgements}
We thank Chris Coey for proofreading and Madeleine Udell and the anonymous reviewers for their comments.
M. Lubin was supported by the DOE Computational Science Graduate
Fellowship, which is provided under grant number DE-FG02-97ER25308.
The work at LANL was funded by the Center for Nonlinear Studies (CNLS) and was carried out under the auspices
of the NNSA of the U.S.
DOE at LANL
under Contract No. DE-AC52-06NA25396. J.P. Vielma was funded by NSF grant CMMI-1351619.
\end{acknowledgements}

% BibTeX users please use one of
%\bibliographystyle{spbasic}      % basic style, author-year citations
\bibliographystyle{spmpsci}      % mathematics and physical sciences
\bibliography{refs}   % name your BibTeX data base

\end{document}